\title{Finite free convolutions via Weingarten calculus}
\author{Jacob Campbell}
\address{Department of Pure Mathematics, University of Waterloo, Waterloo, Ontario N2L 3G1, Canada}
\email{j48campb@uwaterloo.ca}
\author {Zhi Yin}
\address{Institute of Advanced Study in Mathematics, Har\-bin Institute of Technology, Har\-bin 150006, China}
\email{hustyinzhi@163.com}
\newtheorem{thm}{Theorem}[section]
\newtheorem{prop}[thm]{Proposition}
\newtheorem{lem}[thm]{Lemma}
\theoremstyle{definition}
\newtheorem{defn}[thm]{Definition}
\newtheorem{nota}[thm]{Notation}
\newcommand{\bC}{\mathbb{C}}
\newcommand{\bE}{\mathbb{E}}
\newcommand{\bT}{\mathbb{T}}
\newcommand{\bfZ}{\mathbf{Z}}
\newcommand{\bfi}{\mathbf{i}}
\newcommand{\bfj}{\mathbf{j}}
\newcommand{\bfp}{\mathbf{p}}
\newcommand{\bfpp}{\mathbf{pp}}
\newcommand{\Ind}{\mathrm{Ind}}
\newcommand{\Sym}{\mathrm{Sym}}
\newcommand{\Wg}{\mathrm{Wg}}
\newcommand{\diag}{\mathrm{diag}}
\newcommand{\dil}{\mathrm{dil}}
\newcommand{\sgn}{\mathrm{sgn}}
\newcommand{\triv}{\mathrm{triv}}
\newcommand{\sfe}{\mathsf{e}}
\newcommand*\myrect[1]{%
    \begin{tikzpicture}
        \node[draw,rectangle,inner sep=0pt] {#1};
    \end{tikzpicture}}
\newcommand{\recsum}{\mathbin{\textnormal{\myrect{++}}}}
\begin{document}

\begin{abstract}
    We consider the three finite free convolutions for polynomials studied in a recent paper by Marcus, Spielman, and Srivastava. Each can be described either by direct explicit formulae or in terms of operations on randomly rotated matrices. We present an alternate approach to the equivalence between these descriptions, based on combinatorial Weingarten methods for integration over the unitary and orthogonal groups. A key aspect of our approach is to identify a certain \emph{quadrature property}, which is satisfied by some important series of subgroups of the unitary groups (including the groups of unitary, orthogonal, and signed permutation matrices), and which yields the desired convolution formulae.
\end{abstract}

\maketitle

\section{Introduction}

In \cite{MSS2019}, A. Marcus, D. A. Spielman, and N. Srivastava studied three convolution-type operations on polynomials of degree $d$:

\begin{defn}[{\cite[Definitions 1.1 \& 1.4 \& 1.7]{MSS2019}}]
    For polynomials $p(x),q(x) \in \bC[x]$ with degree at most $d$, say 
    \[ p(x) = \sum_{k=0}^d x^{d-k} (-1)^k a_k \text{ and } q(x) = \sum_{k=0}^d x^{d-k} (-1)^k b_k \text{,} \]
    we make the following definitions:
    \begin{enumerate}
        \item the \emph{symmetric additive convolution} of $p(x)$ and $q(x)$ is defined by
            \[ p(x) \boxplus_d q(x) := \sum_{k=0}^d x^{d-k} (-1)^k \left( \sum_{i+j=k} \frac{(d-i)! (d-j)!}{d! (d-k)!} a_i b_j \right) \text{;} \]
        \item the \emph{symmetric multiplicative convolution} of $p(x)$ and $q(x)$ is defined by
            \[ p(x) \boxtimes_d q(x) := \sum_{k=0}^d x^{d-k} (-1)^k \left( \frac{k! (d-k)!}{d!} a_k b_k \right) \text{;} \]
        \item the \emph{asymmetric additive convolution} of $p(x)$ and $q(x)$ is defined by
            \[ p(x) \recsum_d q(x) := \sum_{k=0}^d x^{d-k} (-1)^k \left( \sum_{i+j=k} \left( \frac{(d-i)! (d-j)!}{d! (d-k)!} \right)^2 a_i b_j \right) \text{.} \]
    \end{enumerate}
\end{defn}

These are also called \emph{finite free convolutions}, due to their connection with free convolution in free probability theory, which is expounded upon in \cite{M2018}. The finite free convolutions can be defined explicitly as above, in terms of formulae involving the coefficients, but the motivation is their forthcoming description in terms of expected characteristic polynomials of the corresponding operations with randomly rotated matrices. We use the following notation for characteristic polynomials:
\[ c_x(A) = \det(xI - A) = \sum_{k=0}^d x^{d-k} (-1)^k \sfe_k(A) \]
is the characteristic polynomial of the matrix $A \in M_d(\bC)$ in the variable $x$, and $\sfe_k(A)$ is the $k$-th elementary symmetric function of the eigenvalues of $A$.

\begin{thm}[{\cite[Theorems 1.2 \& 1.5 \& 1.8]{MSS2019}}]
    \label{thm:Convolution}
    Let $p(x),q(x) \in \bC[x]$ be polynomials with degree $d$.
    \begin{description}
        \item[symmetric additive] If $A,B \in M_d(\bC)$ are normal with $c_x(A) = p(x)$ and $c_x(B) = q(x)$, then
            \begin{equation}
                \label{eq:sqsum}
                p(x) \boxplus_d q(x) = \bE_U c_x(A+UBU^*)
            \end{equation}
            where $U$ is a random $d \times d$ unitary matrix.
        \item[symmetric multiplicative] If $A,B \in M_d(\bC)$ are normal with $c_x(A) = p(x)$ and $c_x(B) = q(x)$, then
            \begin{equation}
                \label{eq:sqmult}
                p(x) \boxtimes_d q(x) = \bE_U c_x(AUBU^*)
            \end{equation}
            where $U$ is a random $d \times d$ unitary matrix.
        \item[asymmetric additive] If $A,B \in M_d(\bC)$ are matrices with $c_x(AA^*) = p(x)$ and $c_x(BB^*) = q(x)$, then
            \begin{equation}
                \label{eq:recsum}
                p(x) \recsum_d q(x) = \bE_{U,V} c_x((A+UBV)(A+UBV)^*)
            \end{equation}
            where $U$ and $V$ are independent random $d \times d$ unitary matrices.
    \end{description}
\end{thm}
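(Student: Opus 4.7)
The plan is to equate coefficients of $x^{d-k}$ on the two sides of each identity, reducing everything to a computation of $\bE\,\sfe_k$ of the relevant random matrix. The single representation-theoretic input I would use throughout is that the $k$-th exterior power of the standard representation of $U(d)$ is irreducible of dimension $\binom{d}{k}$; Schur orthogonality for its matrix entries, the $k \times k$ minors $\det(U_{R,S})$ of $U$, gives
\[
    \bE_U\, \det(U_{R,S})\, \overline{\det(U_{R',S'})} \;=\; \frac{\delta_{R,R'}\,\delta_{S,S'}}{\binom{d}{k}},
\]
and equivalently $\bE_U (\wedge^k U) X (\wedge^k U)^* = \frac{\mathrm{tr}(X)}{\binom{d}{k}} I$ for any $X \in M_{\binom{d}{k}}(\bC)$, by Schur's lemma applied to the conjugation action.

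For \eqref{eq:sqmult} I would rewrite $\sfe_k(AUBU^*) = \mathrm{tr}\bigl((\wedge^k A)(\wedge^k U)(\wedge^k B)(\wedge^k U)^*\bigr)$ and apply the averaging formula directly; the expectation equals $\sfe_k(A)\sfe_k(B)/\binom{d}{k}$, and comparing with the coefficient $k!(d-k)!/d!$ in the definition of $\boxtimes_d$ finishes this case.

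For \eqref{eq:sqsum} I would expand each principal minor $\det((A+UBU^*)_{S,S})$ using multilinearity in the rows: summing over the set $T \subseteq S$ of rows taken from $A$ and applying block Laplace expansion yields $\sum_{T'\subseteq S,\,|T'|=|T|} \pm\det(A_{T,T'})\det((UBU^*)_{S\setminus T,\, S\setminus T'})$. Cauchy--Binet then writes $\det((UBU^*)_{R,R'}) = \sum_{S_1,S_2} \det(U_{R,S_1})\det(B_{S_1,S_2})\overline{\det(U_{R',S_2})}$, whose expectation by Schur orthogonality vanishes unless $R = R'$; this forces $T = T'$ in the Laplace sum, where the sign is $+1$. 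Counting the $S \supseteq T$ of size $k$ (there are $\binom{d-|T|}{k-|T|}$ of them) collects everything into the coefficient $(d-i)!(d-j)!/(d!(d-k)!)$ of $\boxplus_d$.

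For \eqref{eq:recsum} I would begin from $\sfe_k(CC^*) = \sum_{|S|=|T|=k} |\det(C_{S,T})|^2$ (again by Cauchy--Binet) with $C = A+UBV$, then expand $\det(C_{S,T})$ and its conjugate by the same block-Laplace procedure, and apply Cauchy--Binet to $(UBV)_{X,Y} = U_{X,\cdot}\,B\,V_{\cdot,Y}$ to separate $U$- and $V$-minors. Independence of $U$ and $V$ factors the expectation, and each factor collapses by the Schur orthogonality formula to $1/\binom{d}{j}$, producing the \emph{squared} coefficient $\bigl((d-i)!(d-j)!/(d!(d-k)!)\bigr)^2$ of $\recsum_d$. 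The main obstacle here is organisational rather than conceptual --- keeping track of signs in the Laplace expansions and verifying that only ``diagonal'' index choices survive the averaging --- after which all three cases fall out uniformly from the single representation-theoretic input above.
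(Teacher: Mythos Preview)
Your argument is correct, and it takes a genuinely different route from the paper's. The paper does not use the irreducibility of $\wedge^k\bC^d$ at all; instead it isolates a \emph{quadrature property}, namely an identity for $\sum_{\sigma\in S_k}\sgn(\sigma)\int_G\prod_i u_{i\bfp(i)}\overline{u_{\sigma(i)\bfp(i)}}\,dU$, and verifies it for $G=U_d$ via the Weingarten calculus: the Haar integral is expanded as a double sum over $S_k$ weighted by $\Wg_{k,d}^U$, and the character expansion $\Wg_{k,d}^U(\pi,\tau)=\frac{1}{(k!)^2}\sum_\lambda \chi^\lambda(1)^2 s_\lambda(1^d)^{-1}\chi^\lambda(\pi^{-1}\tau)$ together with the orthogonality of the $\chi^\lambda$ collapses everything to the $\lambda=1^k$ term. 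Once the quadrature property is in hand, the paper diagonalizes $A$ and $B$ and expands entrywise to reach the $\sfe_k$ formulas. Your approach short-circuits all of this: Schur orthogonality for the matrix coefficients of the irreducible $U_d$-module $\wedge^k\bC^d$ gives the minor identity $\bE_U\det(U_{R,S})\overline{\det(U_{R',S'})}=\delta_{R,R'}\delta_{S,S'}/\binom{d}{k}$ in one stroke, and Cauchy--Binet plus Laplace expansion finish the job without ever diagonalizing (so in fact your computation does not use normality of $A,B$). What the paper's longer route buys is portability: the quadrature property is established by parallel arguments for $U_d$, $O_d$, and the hyperoctahedral series $H_d^s$, so that a single proof of \cref{thm:ConvolutionQuadrature} covers all of these groups simultaneously, whereas your exterior-power argument is tailored to $U_d$. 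Your minor identity is essentially the ``minor-orthogonality'' condition the paper attributes to \cite{MSS2019}, so your proof is closer in spirit to the original Marcus--Spielman--Srivastava argument than to this paper's Weingarten-based reworking.
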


In this paper we present an alternate proof of \cref{thm:Convolution}, and we also prove the analogous results for random \emph{orthogonal} matrices instead of random \emph{unitary} matrices. Our approach revolves around the Weingarten calculus for integration on the unitary and orthogonal groups; in particular, we take advantage of the connection between Weingarten functions and the combinatorial representation theory of symmetric groups. Using the orthogonality relations for irreducible characters of $S_k$ in the unitary case and for the zonal spherical functions of the Gelfand pair $(S_{2k},H_k)$ in the orthogonal case, we obtain combinatorial identities for the relevant Weingarten functions which allow us to greatly reduce the apparent complexity of direct computations of the relevant expected characteristic polynomials.

A very interesting aspect of \cite{MSS2019} is what one might call a quadrature phenomenon: the random unitary or orthogonal matrices in the definitions of the finite free convolutions can be replaced with random signed permutation matrices, thus discretizing these operations. They isolate a property that they call minor-orthogonality, which makes this quadrature work and which is satisfied by the group $H_d$ of signed permutation matrices. Our techniques also yield quadrature results, with a \emph{quadrature property} which plays a similar role as minor-orthogonality:

\begin{restatable}{defn}{qp}
    A compact subgroup $G \leq U_d$ satisfies the \emph{quadrature property} if
    \[ \sum_{\sigma \in S_k} \sgn(\sigma) \int_G \prod_{i=1}^k u_{i \bfp(i)} \overline{u_{\sigma(i) \bfp(i)}} \, dU = \begin{cases}
        \frac{(d-k)!}{d!} & \text{if } \bfp \text{ is injective} \\
        0 & \text{otherwise}
    \end{cases} \]
    for $\bfp : [k] \to [d]$, for all $0 \leq k \leq d$.
\end{restatable}

In these terms, the main results are that various groups have the quadrature property:

\begin{thm}
    \label{thm:QuadratureGroups}
    The following groups have the quadrature property:
    \begin{enumerate}
        \item the group $U_d$ of $d \times d$ unitary matrices;
        \item the group $O_d$ of $d \times d$ orthogonal matrices;
        \item the groups $H_d^s = \widehat{\bfZ_s} \wr S_d$, for $2 \leq s \leq \infty$. (Here, $\bfZ_{\infty}$ is just $\bfZ$, and $S_d$ is the group of $d \times d$ permutation matrices.) In particular, the case $s=2$ recovers the group of $d \times d$ signed permutation matrices.
    \end{enumerate}
\end{thm}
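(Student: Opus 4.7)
The plan is to apply Weingarten calculus to each group, reduce using symmetry, and then invoke orthogonality of symmetric-group characters (for $U_d$) or of zonal spherical functions of the Gelfand pair $(S_{2k}, H_k)$ (for $O_d$) to extract the sign representation $\chi^{(1^k)}$ and obtain the value $(d-k)!/d!$. First I would dispose of the non-injective case uniformly: if $\bfp(i) = \bfp(j)$ for some $i \neq j$ and $\tau = (i\ j) \in S_k$, then the integrand $\prod_l u_{l,\bfp(l)} \overline{u_{\sigma(l),\bfp(l)}}$ is invariant under $\sigma \mapsto \sigma\tau$ (the $l = i, j$ factors reshuffle into the same product since $\bfp(i) = \bfp(j)$), while $\sgn(\sigma\tau) = -\sgn(\sigma)$, so the contributions cancel pointwise before integration. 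Thus it remains to verify the claim for injective $\bfp$ in each of the three cases.

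For $G = U_d$, I apply the unitary Weingarten formula to $\int \prod_a u_{a,\bfp(a)} \overline{u_{\sigma(a),\bfp(a)}}\, dU$, which is a double sum over $\alpha, \beta \in S_k$ weighted by $\Wg(\alpha\beta^{-1},d)$ and by Kronecker deltas enforcing row and column matchings. The row deltas force $\alpha = \sigma^{-1}$, and the column deltas, under the injectivity of $\bfp$, force $\beta = \Id$. The integral collapses to $\Wg(\sigma,d)$, so the quadrature sum becomes $\sum_\sigma \sgn(\sigma) \Wg(\sigma,d)$. Expanding $\Wg(\sigma,d) = \frac{1}{(k!)^2} \sum_\lambda \frac{(\chi^\lambda(\Id))^2}{s_\lambda(1^d)} \chi^\lambda(\sigma)$ and using the orthogonality $\sum_\sigma \sgn(\sigma) \chi^\lambda(\sigma) = k!\, \delta_{\lambda,(1^k)}$, only $\lambda = (1^k)$ survives, and $s_{(1^k)}(1^d) = \binom{d}{k}$ yields $(d-k)!/d!$.

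For $G = O_d$, conjugation is trivial and the integrand becomes a product of $2k$ real entries. I apply the orthogonal Weingarten formula as a sum over pairs of pair partitions $\mathfrak{m}, \mathfrak{n}$ of $[2k]$, labeling positions so that $2i-1$ and $2i$ correspond to the factors with column $\bfp(i)$. The column constraint, for injective $\bfp$, forces $\mathfrak{n}$ to be the canonical pairing $\{\{1,2\},\{3,4\},\ldots,\{2k-1,2k\}\}$. The row constraint rules out pairs of the same parity, so $\mathfrak{m}$ must be a bipartite pairing encoding some permutation $\pi \in S_k$, with the equality $\pi = \sigma$ enforced. The join $\mathfrak{m} \vee \mathfrak{n}$ then has coset type equal to the cycle type of $\sigma$, reducing the quadrature sum to $\sum_\sigma \sgn(\sigma) \Wg^O(\text{cycle type of } \sigma, d)$. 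I then expand $\Wg^O$ in the zonal spherical functions $\omega^\lambda$ of $(S_{2k}, H_k)$ and use the analogous orthogonality relation to isolate the $\lambda = (1^k)$ contribution, which again evaluates to $(d-k)!/d!$.

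For $G = H_d^s$, every element factors as $U = PD$ with $P$ the permutation matrix of some $\rho \in S_d$ and $D = \diag(z_1, \ldots, z_d)$, $z_j \in \widehat{\bfZ_s}$. Then $u_{ij} = \delta_{i,\rho(j)} z_j$, so
\[ u_{i,\bfp(i)} \overline{u_{\sigma(i),\bfp(i)}} = \delta_{i,\rho(\bfp(i))} \delta_{\sigma(i),\rho(\bfp(i))} |z_{\bfp(i)}|^2 = \delta_{i,\rho(\bfp(i))} \delta_{\sigma(i),\rho(\bfp(i))}, \]
and the phases drop out entirely, uniformly in $s$. The integral reduces to a count of $\rho \in S_d$ for which the product is nonzero, which requires $\sigma = \Id$ and $\rho \circ \bfp = \Id$ on $[k]$, giving $(d-k)!$ such $\rho$'s out of $d!$. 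I expect the orthogonal case to be the main obstacle, since the combinatorics of pair-partition joins, coset types, and zonal spherical function orthogonality is considerably more delicate than the character theory used for $U_d$ or the direct count used for $H_d^s$.
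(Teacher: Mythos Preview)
Your plan is correct and, for the injective case, follows the paper almost exactly: unitary Weingarten plus character orthogonality isolates $\lambda=(1^k)$ and yields $(d-k)!/d!$; orthogonal Weingarten plus the zonal spherical functions of $(S_{2k},H_k)$ does the same; and $H_d^s$ is a direct count once the phases cancel via $|z_{\bfp(i)}|^2=1$. One caveat in the orthogonal case: ``the analogous orthogonality relation'' is not quite immediate, because $\sgn$ is not literally one of the $\omega^\lambda$. The paper proves a separate lemma computing $\sum_{\sigma\in S_k}\sgn(\sigma)\,\omega^\lambda(\sigma)$ by rewriting the sign-weighted $S_k$-sum as a $z_{2\rho}$-weighted sum using $\omega^{1^k}_\rho=(-1)^{k-\ell(\rho)}/2^{k-\ell(\rho)}$ and $z_{2\rho}=2^{\ell(\rho)}z_\rho$, and then evaluating the surviving $\lambda=(1^k)$ term via a Stirling-number identity; you should expect to do the same.

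Where you genuinely differ from the paper, and to your advantage, is the non-injective case. The paper treats it three separate ways: for $U_d$ and $O_d$ it argues cancellation \emph{after} applying Weingarten, using that $\Wg^U$ (resp.\ $\Wg^O$) depends only on cycle type (resp.\ coset type) together with invariance of the inner $\tau$-sum under translation by $(i,j)$; for $H_d^s$ it notes that no $\tau\in S_d$ can agree with a non-injective $\bfp$ on $[k]$. Your observation that the integrand $\prod_l u_{l,\bfp(l)}\overline{u_{\sigma(l),\bfp(l)}}$ is itself invariant under $\sigma\mapsto\sigma\,(i\,j)$ whenever $\bfp(i)=\bfp(j)$ handles all three groups at once, pointwise before integration, and is cleaner than the paper's arguments.
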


We prove \cref{thm:Convolution} in terms of this quadrature property; specifically, we prove the following theorem for the expected elementary symmetric functions $\sfe_k$ of the eigenvalues of the randomly rotated matrices, which is easily seen to recover \cref{thm:Convolution} with $G = U_d$:

\begin{restatable}[{\cite[Theorems 2.10 \& 2.12 \& 2.14]{MSS2019}}]{thm}{convquad}
    \label{thm:ConvolutionQuadrature}
    Let $G \leq U_d$ be a compact subgroup with the quadrature property. Then
    \begin{enumerate}
        \item for $0 \leq k \leq d$, we have
            \[ \int_G \sfe_k(A+UBU^*) \, dU = \sum_{i+j=k} \frac{(d-i)!(d-j)!}{d!(d-k)!} \sfe_i(A) \sfe_j(B) \]
            for normal $A,B \in M_d(\bC)$;
        \item for $0 \leq k \leq d$, we have
            \[ \int_G \sfe_k(AUBU^*) \, dU = \frac{k! (d-k)!}{d!} \sfe_k(A) \sfe_k(B) \]
            for normal $A,B \in M_d(\bC)$;
        \item for $0 \leq k \leq d$, we have
            \begin{align*}
                &\quad \iint_G \sfe_k((A+UBV)(A+UBV)^*) \, dU \, dV \\
                &= \sum_{i+j=k} \left( \frac{(d-i)!(d-j)!}{d!(d-k)!} \right)^2 \sfe_i(AA^*) \sfe_j(BB^*)
            \end{align*}
            for $A,B \in M_d(\bC)$.
    \end{enumerate}
\end{restatable}

Since the first version of this paper was posted, there has been a substantial update to \cite{MSS2019}, and we have incorporated some of the changes. At some points we still refer to the first version \cite{MSS2015}, namely for some technical details about the asymmetric additive convolution which do not differ in our proofs.

The paper is organized as follows. In Section 2 we review some preliminaries on permutations and partitions, the combinatorial representation theory of $S_k$ and the Gelfand pair $(S_{2k},H_k)$, and the Weingarten calculus for integration on the unitary and orthogonal groups. In Section 3 we isolate, from the definitions of the finite free convolutions, the computations we need to handle, and we arrive at our quadrature property. Sections 4, 5, and 6 are devoted to the proofs that $U_d$, $O_d$, and $H_d^s$, respectively, have the quadrature property. In the final Section 7 we prove \cref{thm:ConvolutionQuadrature}, thus finishing the proof of \cref{thm:Convolution} in terms of the quadrature property.

\section{Preliminaries}

In this section we review some necessary preliminaries from combinatorics and random matrix theory. We mostly follow the notation of \cite{CM2009}.

\subsection{Elementary combinatorics}

For an integer partition $\lambda \vdash k$, we may either write $\lambda = (\lambda_1,\lambda_2,\ldots)$, or when convenient, $\lambda= (1^{m_1(\lambda)}, 2^{m_2(\lambda)}, \ldots)$ where $m_i(\lambda)$ is the multiplicity of $i$ in $\lambda$; write $\ell(\lambda) = \sum_{i \geq 1} m_i(\lambda)$ for the length of $\lambda$. We denote by $\mu_{\sigma} \vdash k$ the cycle type of a permutation $\sigma \in S_k$, and for $\rho \vdash k$, we write $z_{\rho} := \prod_{i \geq 1} i^{m_i(\rho)} m_i(\rho)!$. Let us collect some properties that we will use:

\begin{lem}
    \label{lem:PermutationsPartitions}
    Let $\rho \vdash k$.
    \begin{enumerate}
        \item The number of permutations $\sigma \in S_k$ with $\mu_{\sigma} = \rho$ is $\frac{k!}{z_{\rho}}$.
        \item We have $z_{2 \rho} = 2^{\ell(\rho)} z_{\rho}$, where $2 \rho = (2 \rho_1,2 \rho_2,\ldots)$.
        \item For $\sigma \in S_k$, we have $\sgn(\sigma) = (-1)^{k - \ell(\mu_{\sigma})}$.
    \end{enumerate}
\end{lem}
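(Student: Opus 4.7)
The three claims are all standard facts from the combinatorics of the symmetric group, so the plan is straightforward bookkeeping rather than genuine invention; the only real question is how to organize the presentation cleanly.

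For (1), I would use the familiar orbit-stabilizer/overcounting argument. Fix a cycle type $\rho = (1^{m_1},2^{m_2},\ldots)$ and consider a template with $m_i$ empty cycles of length $i$ for each $i$; writing the integers $1,\ldots,k$ into the $k$ total slots in any order gives $k!$ labelled permutations. To quotient down to genuine cycle notation, I would account for the two symmetries separately: cyclically rotating each $i$-cycle gives a factor of $i^{m_i(\rho)}$, and permuting the $m_i$ cycles of a common length $i$ among themselves gives $m_i(\rho)!$. Taking the product over $i$ yields the denominator $z_\rho$.

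For (2), I would just expand the definition. Since $2\rho$ has parts $2\rho_1,2\rho_2,\ldots$, the multiplicities satisfy $m_{2i}(2\rho) = m_i(\rho)$ and $m_j(2\rho) = 0$ for odd $j$, so $z_{2\rho} = \prod_{i \ge 1} (2i)^{m_i(\rho)}\, m_i(\rho)!$. Pulling out the factors of $2$ contributes $2^{\sum_i m_i(\rho)} = 2^{\ell(\rho)}$, and the remaining product is $z_\rho$.

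For (3), I would start from the observation that a cycle of length $j$ decomposes as a product of $j-1$ transpositions, hence has sign $(-1)^{j-1}$. Writing $\sigma$ as a product of its disjoint cycles and multiplying signs, one gets $\sgn(\sigma) = \prod_i (-1)^{(\mu_\sigma)_i - 1} = (-1)^{\sum_i (\mu_\sigma)_i - \ell(\mu_\sigma)} = (-1)^{k - \ell(\mu_\sigma)}$, using that the parts of $\mu_\sigma$ sum to $k$.

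There is no real obstacle here; if anything the only pitfall is notational, since $2\rho$ in (2) denotes the partition with each part doubled (as stated in the lemma) rather than the partition with each multiplicity doubled, so I would make that explicit when expanding $z_{2\rho}$ to avoid confusion. None of the three parts is long enough to require splitting into subcases, and together they can comfortably fit in a short paragraph each.
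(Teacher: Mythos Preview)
Your proposal is correct and matches the paper's proof almost exactly: parts (2) and (3) are handled identically, by expanding the definition of $z_{2\rho}$ and by writing each cycle as a product of transpositions. The only difference is that for (1) the paper simply cites \cite[Proposition 1.3.2]{SBook}, whereas you spell out the standard overcounting argument that underlies that reference; this makes your version slightly more self-contained but is not a different route.
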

\begin{proof}
    For (1), see e.g. \cite[Proposition 1.3.2]{SBook}. For (2), we have
    \[ z_{2\rho} = \prod_{i \geq 1} (2i)^{m_i(\rho)} m_i(\rho)! = 2^{\ell(\rho)} \prod_{i \geq 1} i^{m_i(\rho)} m_i(\rho)! = 2^{\ell(\rho)} z_{\rho} \]
    since $\ell(\rho) = \sum_{i \geq 1} m_i(\rho)$. For (3), a cycle of length $i$ is a product of $i-1$ transpositions:
    \[ (j_1,\ldots,j_i) = (j_1,j_2) \cdots (j_{i-1},j_i) \text{.} \]
    So if $\mu_{\sigma} = (1^{m_1},2^{m_2},\ldots)$, then $\sigma$ can be written as a product of
    \[ \sum_{i \geq 1} (i-1) m_i = \sum_{i \geq 1} i m_i - \sum_{i \geq 1} m_i =  k - \ell(\mu_{\sigma}) \]
    transpositions, which gives $\sgn(\sigma) = (-1)^{k - \ell(\mu_{\sigma})}$.
\end{proof}

The number of permutations in $S_k$ with exactly $i$ cycles is called the \emph{unsigned Stirling number of the first kind} with parameters $k$ and $i$ and is denoted by $c(k,i)$. These numbers are related to the $k$-th \emph{rising factorial}, which is
\[ x^{(k)} = x (x+1) \cdots (x+k-1) \text{,} \]
by the following lemma:

\begin{lem}
    \label{lem:Stirling}
    We have
    \[ \sum_{i=0}^k  c(k, i) x^i = x^{(k)} \text{.} \]
\end{lem}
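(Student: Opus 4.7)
The plan is to prove the identity by induction on $k$, using the standard recursion for the unsigned Stirling numbers of the first kind. Specifically, I would first establish that
\[ c(k+1,i) = k \, c(k,i) + c(k,i-1) \]
for $k \geq 0$ and $i \geq 1$, with the boundary values $c(0,0) = 1$ and $c(k,0) = 0$ for $k \geq 1$. The combinatorial argument is that a permutation $\sigma \in S_{k+1}$ with exactly $i$ cycles is determined by what happens to the element $k+1$: either $\{k+1\}$ is its own cycle, in which case removing it gives a permutation of $[k]$ with $i-1$ cycles (contributing $c(k,i-1)$); or $k+1$ lies in a cycle of length at least $2$, in which case deleting $k+1$ from that cycle gives a permutation of $[k]$ with $i$ cycles, with $k$ choices for the position of $k+1$ in that cycle (contributing $k\,c(k,i)$).

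Next, I would observe that the rising factorial satisfies the matching recursion
\[ x^{(k+1)} = x^{(k)} \, (x+k) = x \cdot x^{(k)} + k \cdot x^{(k)} \text{.} \]
The base case $k=0$ is immediate, since $x^{(0)} = 1 = c(0,0) x^0$. For the inductive step, assume $\sum_{i=0}^k c(k,i) x^i = x^{(k)}$. Then
\[ x^{(k+1)} = (x+k) \sum_{i=0}^k c(k,i) x^i = \sum_{i=0}^k c(k,i) x^{i+1} + k \sum_{i=0}^k c(k,i) x^i \text{,} \]
and reindexing the first sum and using the recursion for $c(k+1,i)$ gives $\sum_{i=0}^{k+1} c(k+1,i) x^i$, completing the induction.

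There is no real obstacle here; the main point is to verify the combinatorial recursion cleanly. An alternative, slightly slicker route would be to expand $x^{(k)} = \prod_{j=0}^{k-1}(x+j)$ directly and interpret the coefficient of $x^i$ as a sum over choices of subsets of $\{0,1,\ldots,k-1\}$, matching those choices to cycle structures via the canonical cycle notation bijection (writing each cycle with its maximum first and ordering cycles by increasing maxima); but the inductive proof above is shorter and self-contained given what the paper already states.
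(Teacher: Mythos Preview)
Your inductive argument is correct and is the standard proof of this identity. The paper itself does not give a proof at all: it simply cites \cite[Proposition 1.3.7]{SBook}. So your write-up is more detailed than what appears in the paper, and the induction via the recursion $c(k+1,i) = k\,c(k,i) + c(k,i-1)$ together with $x^{(k+1)} = (x+k)\,x^{(k)}$ is exactly the argument one finds in the cited reference.
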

\begin{proof}
    See e.g. \cite[Proposition 1.3.7]{SBook}.
\end{proof}

Recall that a \emph{partition} of a set $X$ is a collection of disjoint subsets, called \emph{blocks} of the partition, whose union is $X$. Let $P(k)$ be the set of partitions of the set $[k] = \{ 1,\ldots,k \}$ and for $\pi \in P(k)$, write $| \pi |$ for the number of blocks of $\pi$. Write $P_2(k)$ for the set of partitions of $[k]$ whose blocks are all of size $2$; note that this is only non-empty when $k$ is even.

The ordering of $P(k)$ is defined by letting $\pi \leq \sigma$ if each block of $\pi$ is a subset of a block of $\sigma$. This ordering makes $P(k)$ a lattice. For $\bfi : [k] \to [d]$, write $\ker(\bfi)$ for the element of $P(k)$ whose blocks are the equivalence classes of the relation defined by $s \sim t$ if and only if $\bfi(s) = \bfi(t)$; in other words $\ker(\bfi)$ is the partition of $[k]$ whose blocks are the ``level sets" of the multi-index $\bfi$. With this notation, $\pi \leq \ker(\bfi)$ if and only if $\bfi(s) = \bfi(t)$ whenever $s$ and $t$ are in the same block of $\pi$; in other words the multi-index $\bfi$ labels the blocks of $\pi$ in a consistent way.

There is a natural embedding of $P_2(2k)$ into $S_{2k}$ which we will use extensively as in \cite{CM2009}: each $\pi \in P_2(2k)$ can be written uniquely in the form
\[ \{ \{ \pi(1),\pi(2) \},\ldots,\{ \pi(2k-1),\pi(2k) \} \} \]
with $\pi(2i-1) < \pi(2i)$ for $1 \leq i \leq k$ and $\pi(1) < \cdots < \pi(2k-1)$, and the embedding is
\begin{equation}
    \label{eq:Embed}
    \pi \mapsto \begin{pmatrix}
        1 & \cdots & 2k \\
        \pi(1) & \cdots & \pi(2k)
    \end{pmatrix} \in S_{2k} \text{.}
\end{equation}

\subsection{Combinatorial representation theory}

The representation theory of symmetric groups is well known to be described by the combinatorics of integer partitions and Young diagrams; a nice reference on this theory is \cite{CSTBook}. In particular, the irreducible representations of $S_r$ are canonically labeled, say as $V^{\lambda}$, by the integer partitions $\lambda \vdash r$. Write $\chi^{\lambda}$ for the character of $V^{\lambda}$, and $\chi_{\rho}^{\lambda}$ for the value of $\chi^{\lambda}$ on the conjugacy class in $S_r$ labeled by $\rho \vdash r$.

\subsubsection{The symmetric group and Schur functions}

An important general feature of the representation theory of finite groups is that the characters of irreducible representations are orthogonal, and for $S_k$ this takes the following form:

\begin{thm}[Orthogonality relations for $\chi^{\lambda}$]
    \label{thm:SymmetricOrthogonal}
    For $\lambda,\mu \vdash k$, we have
    \[ \frac{1}{| S_k |} \sum_{\sigma \in S_k} \chi^{\lambda}(\sigma) \chi^{\mu}(\sigma) = \sum_{\rho \vdash k} \frac{1}{z_{\rho}} \chi_{\rho}^{\lambda} \chi_{\rho}^{\mu} = \begin{cases}
        1 & \text{if } \lambda = \mu \\
        0 & \text{otherwise}
    \end{cases} \text{.} \]
\end{thm}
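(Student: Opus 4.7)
The plan is to split the two claimed equalities and handle them separately, since the first is a purely combinatorial regrouping and the second is the classical orthogonality of irreducible characters for a finite group specialized to $S_k$.

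For the first equality, the key fact is that each $\chi^\lambda$ is a class function on $S_k$, and conjugacy classes in $S_k$ are indexed by cycle type, i.e.\ by partitions $\rho \vdash k$. I would therefore rewrite the sum over $\sigma \in S_k$ as a sum over cycle types $\rho$, using \cref{lem:PermutationsPartitions}(1) to count the $k!/z_\rho$ permutations of each cycle type:
\[ \frac{1}{|S_k|} \sum_{\sigma \in S_k} \chi^\lambda(\sigma) \chi^\mu(\sigma) = \frac{1}{k!} \sum_{\rho \vdash k} \frac{k!}{z_\rho} \chi^\lambda_\rho \chi^\mu_\rho = \sum_{\rho \vdash k} \frac{1}{z_\rho} \chi^\lambda_\rho \chi^\mu_\rho \text{.} \]

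For the second equality, I would invoke the standard first orthogonality relation for complex irreducible characters of a finite group $G$, namely $\frac{1}{|G|} \sum_{g \in G} \chi^\lambda(g) \overline{\chi^\mu(g)} = \delta_{\lambda,\mu}$, which follows from Schur's lemma applied to the averaging projection onto the space of intertwiners between $V^\lambda$ and $V^\mu$. Applied to $G = S_k$, I then need to dispose of the complex conjugate on $\chi^\mu$. The cleanest way is to note that every permutation $\sigma \in S_k$ is conjugate to its inverse $\sigma^{-1}$ (since $\mu_{\sigma} = \mu_{\sigma^{-1}}$), whence $\chi^\mu(\sigma) = \chi^\mu(\sigma^{-1}) = \overline{\chi^\mu(\sigma)}$ for any representation $\chi^\mu$, so that the character values are real (in fact integers) and the conjugate may be dropped.

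There is no substantial obstacle here; both ingredients are standard and appear in any of the references cited earlier in the paper, e.g.\ \cite{CSTBook, SBook}. Accordingly, the cleanest write-up is a short two-line proof: first rewrite the average over $S_k$ as a weighted sum over cycle types to obtain the first equality, and then cite the general orthogonality of irreducible characters together with the realness of $S_k$-character values to obtain the second.
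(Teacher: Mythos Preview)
Your argument is correct. The paper itself does not prove this theorem at all: it simply cites \cite[Corollary 1.3.7]{CSTBook}. Your write-up supplies what the paper omits, and the two ingredients you use --- regrouping by cycle type via \cref{lem:PermutationsPartitions}(1) for the first equality, and the general first orthogonality relation together with the realness of $S_k$-characters for the second --- are exactly the standard ones behind the cited result. There is nothing to correct; if anything, your version is more informative than the paper's one-line reference.
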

\begin{proof}
    See e.g. \cite[Corollary 1.3.7]{CSTBook}.
\end{proof}

The graded algebra whose $k$-th component is the space of class functions on $S_k$ is identified with the graded algebra of symmetric functions, see e.g. \cite[Section I.7]{MBook}, and the symmetric functions $s_{\lambda}$ corresponding to the irreducible characters $\chi^{\lambda}$ are called the \emph{Schur functions}. We just need a particular value of $s_{\lambda}$:

\begin{prop}[Hook-content formula]
    We have
    \[ s_{\lambda}(1^d) = \frac{\chi^{\lambda}(1)}{k!} \prod_{(i,j) \in \lambda} (d+j-i) \]
    for $\lambda \vdash k$ and $k \leq d$.
\end{prop}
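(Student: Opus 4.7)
The plan is to obtain this identity by combining two classical results about the representation theory of $S_k$ and symmetric functions. The first is the \emph{hook-content formula} for principal specializations of Schur functions (see e.g.\ \cite[\S I.3, Example 4]{MBook}),
\[ s_\lambda(1^d) = \prod_{(i,j) \in \lambda} \frac{d+j-i}{h_\lambda(i,j)}, \]
where $h_\lambda(i,j)$ denotes the hook length at the cell $(i,j)$ of the Young diagram of $\lambda$. The second is the \emph{hook length formula} of Frame, Robinson, and Thrall,
\[ \chi^\lambda(1) = \dim V^\lambda = \frac{k!}{\prod_{(i,j) \in \lambda} h_\lambda(i,j)}. \]
Multiplying the hook-content formula by $\chi^\lambda(1) \cdot k!^{-1}$, the hook-length product in the denominator of the former cancels exactly with the hook-length product appearing in the latter, yielding
\[ s_\lambda(1^d) = \frac{\chi^\lambda(1)}{k!} \prod_{(i,j) \in \lambda} (d+j-i), \]
which is the claimed identity.

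If one wanted a self-contained derivation, the cleanest route would go through the combinatorial definition of $s_\lambda$ as the generating function for semistandard Young tableaux: $s_\lambda(1^d)$ equals the number of SSYT of shape $\lambda$ with entries in $\{1, \ldots, d\}$, and this count can be evaluated via the Lindström--Gessel--Viennot lemma applied to a family of non-intersecting lattice paths, producing the product $\prod (d+j-i)/h_\lambda(i,j)$. Alternatively, Weyl's dimension formula gives
\[ s_\lambda(1^d) = \prod_{1 \leq i < j \leq d} \frac{\lambda_i - \lambda_j + j - i}{j - i}, \]
which can be algebraically massaged into the hook-content form by the standard identification of the numerator and denominator products with contents and hook lengths, respectively.

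The hypothesis $k \leq d$ ensures $\ell(\lambda) \leq d$, so that every factor $d + j - i$ in the product (with $1 \leq i \leq \ell(\lambda)$ and $1 \leq j \leq \lambda_i$) satisfies $d + j - i \geq d + 1 - \ell(\lambda) \geq 1 > 0$; in particular the expression is nonzero and coincides with $s_\lambda(1^d)$ without degenerate cases. Since both ingredients are textbook results and the manipulation between them is essentially a one-line cancellation, there is no serious obstacle in the argument; the only real task is to identify the appropriate classical formulae and observe that the hook-length product plays precisely matched roles on the two sides.
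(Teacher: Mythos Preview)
Your derivation is correct: combining the standard hook-content formula $s_\lambda(1^d) = \prod_{(i,j)\in\lambda}(d+j-i)/h_\lambda(i,j)$ with the Frame--Robinson--Thrall hook length formula $\chi^\lambda(1) = k!/\prod_{(i,j)\in\lambda} h_\lambda(i,j)$ immediately yields the stated identity, and your remark on the hypothesis $k\leq d$ is accurate. The paper itself does not give a proof at all --- it simply cites a textbook (\cite[Theorem~4.3.3]{CSTBook}) for this background result --- so your argument is strictly more informative than what appears there, while being entirely consistent with it.
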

\begin{proof}
    See e.g. \cite[Theorem 4.3.3]{CSTBook}.
\end{proof}

\subsubsection{The Gelfand pair $(S_{2k},H_k)$}

If $G$ is a finite group and $K$ is a subgroup of $G$, recall that $(G,K)$ is called a \emph{Gelfand pair} if the trivial representation of $K$ induces a multiplicity-free representation of $G$. Write $H_k$ for the centralizer of $(1,2) \cdots (2k-1,2k) \in S_{2k}$; this is called the \emph{hyperoctahedral group} and has order $2^k k!$. The hyperoctahedral group $H_k$ may be alternately described as the group of signed permutations of $k$ symbols, as the symmetry group of a $k$-dimensional hypercube, or as the wreath product $S_2 \wr S_k$. It is well known (see e.g. \cite[VII.2.2]{MBook}) that $(S_{2k},H_k)$ is a Gelfand pair.

Associated to a Gelfand pair is its family of \emph{zonal spherical functions}. These are defined by taking the characters of the irreducible representations contained in $\Ind_K^G(\triv)$ and averaging them over $K$, which we will make more precise below. By \cite[VII.2.4]{MBook} the irreducible representations of $S_{2k}$ contained in $\Ind_{H_k}^{S_{2k}}(\triv)$ are precisely the ones labeled by $2 \lambda$ for $\lambda \vdash k$, so we make the following concrete definition:

\begin{defn}[Zonal spherical functions]
    For $\lambda = (\lambda_1,\lambda_2,\ldots) \vdash k$, define $\omega^{\lambda} : S_{2k} \to \bC$ by
    \[ \omega^{\lambda}(\sigma) = \frac{1}{| H_k |} \sum_{\zeta \in H_k} \chi^{2\lambda} (\sigma \zeta) \]
    for $\sigma \in S_{2k}$. This is called the \emph{zonal spherical function} of the Gelfand pair $(S_{2k},H_k)$ corresponding to $\lambda$.
\end{defn}

Let us single out the values of a particular $\omega^{\lambda}$:

\begin{lem}
    \label{lem:ZonalValue}
    For $\rho \vdash k$, we have
    \[ \omega_\rho^{1^k} = \frac{(-1)^{k-\ell(\rho)}}{2^{k-\ell(\rho)}} \text{.} \]
\end{lem}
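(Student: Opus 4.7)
The plan is to open the definition directly and exploit the identity $\chi^{(2^k)} = \sgn \cdot \chi^{(k,k)}$, which follows from the conjugate-partition relation $(2^k)' = (k,k)$. Choose a canonical representative of coset type $\rho = (\rho_1,\ldots,\rho_\ell)$ by taking $\sigma_\rho = \tau_{\rho_1}\cdots\tau_{\rho_\ell}$ on disjoint blocks, where $\tau_m = (2,3)(4,5)\cdots(2m-2,2m-1) \in S_{2m}$ is a product of $m-1$ adjacent transpositions. A direct count gives $\sgn(\sigma_\rho) = \prod_i (-1)^{\rho_i - 1} = (-1)^{k-\ell(\rho)}$, so substituting into the definition yields
\[ \omega_\rho^{1^k} = \frac{(-1)^{k-\ell(\rho)}}{|H_k|} \sum_{\zeta \in H_k} \sgn(\zeta) \chi^{(k,k)}(\sigma_\rho \zeta). \]
The sign factor in the target formula is already accounted for, and it remains to show that the inner sum equals $|H_k|/2^{k-\ell(\rho)}$.

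For the character sum, I would leverage the wreath-product structure $H_k = (S_2)^k \rtimes S_k$ together with the observation that the $S_k$-factor embeds \emph{evenly} into $S_{2k}$ (each $j$-cycle in $S_k$ becomes a pair of $j$-cycles in $S_{2k}$), so $\sgn|_{H_k}$ depends only on the $(S_2)^k$-component. Combined with the disjoint-block structure of $\sigma_\rho$, this should yield a multiplicativity
\[ \omega_\rho^{1^k} = \prod_{i=1}^{\ell(\rho)} \omega_{(\rho_i)}^{1^{\rho_i}}, \]
reducing the problem to the single-part case. The single-part identity $\omega_{(m)}^{1^m} = (-1)^{m-1}/2^{m-1}$ can then be established by induction on $m$, evaluating $\chi^{(2^m)}(\tau_m \zeta)$ for $\zeta \in H_m$ via the Murnaghan--Nakayama rule applied to the diagram $(2^m)$, whose rim structure admits only two rim hooks at each removal step (equivalently, via the Jacobi--Trudi identity $s_{(2^m)} = e_m^2 - e_{m+1}e_{m-1}$).

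The main obstacle is rigorously establishing the multiplicativity when $\rho$ has repeated parts: the subgroup of $H_k$ that permutes equal-sized blocks of $\sigma_\rho$ could in principle interfere with the factorization, since $\chi^{(k,k)}(\sigma_\rho \zeta)$ depends globally on the cycle structure of $\sigma_\rho \zeta$ rather than on a block-by-block restriction. Controlling these contributions requires a careful analysis of how the cycle type of $\sigma_\rho \zeta$ varies with $\zeta$, paired with a Frobenius-style expansion of $\chi^{(k,k)}$ in power-sum coordinates. The sign character should be the saving grace: since block-permuting elements of $H_k$ embed evenly in $S_{2k}$, their contributions to $\sgn(\zeta) \chi^{(k,k)}(\sigma_\rho \zeta)$ should collapse into a combinatorial count exactly matching the $2^{-(k-\ell(\rho))}$ factor.
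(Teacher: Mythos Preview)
The paper does not actually prove this lemma; it simply cites Macdonald, \emph{Symmetric Functions and Hall Polynomials}, Example VII.2.2(b). Your proposal therefore attempts a genuinely different, hands-on route --- but the plan has a real gap, and it is broader than you identify.

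The multiplicativity
\[ \omega_\rho^{1^k} \;=\; \prod_{i=1}^{\ell(\rho)} \omega_{(\rho_i)}^{1^{\rho_i}} \]
is already nontrivial when $\rho$ has \emph{distinct} parts, not only for repeated parts. The left-hand side is an $H_k$-average of the single character $\chi^{(2^k)}$ on $S_{2k}$, whereas the right-hand side involves $H_{\rho_i}$-averages of the \emph{different} characters $\chi^{(2^{\rho_i})}$ on the smaller groups $S_{2\rho_i}$. The restriction of $\chi^{(2^k)}$ to $S_{2\rho_1} \times \cdots \times S_{2\rho_\ell}$ is a large sum of tensor products of irreducibles (governed by iterated Littlewood--Richardson), not just the single product $\chi^{(2^{\rho_1})} \otimes \cdots \otimes \chi^{(2^{\rho_\ell})}$, so nothing formal forces the two sides to agree. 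In effect this multiplicativity is equivalent in strength to the formula you are trying to prove, and your proposed rescue via a ``Frobenius-style expansion of $\chi^{(k,k)}$ in power-sum coordinates'' is exactly the passage to symmetric functions that Macdonald's argument already uses --- at that point you are no longer giving an independent character-theoretic proof.

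For comparison, Macdonald's argument is short and avoids all of this. The zonal polynomial $Z_{1^k}$ is a scalar multiple of the elementary symmetric function $e_k$ (since $P_{1^k}^{(\alpha)} = e_k$ for every Jack parameter $\alpha$), and the normalization $Z_{1^k}(1^d) = (d)_k = k!\, e_k(1^d)$ forces $Z_{1^k} = k!\, e_k$. Substituting the classical expansion $e_k = \sum_{\rho \vdash k} (-1)^{k-\ell(\rho)} z_\rho^{-1} p_\rho$ into the general relation expressing $Z_\lambda$ in power-sum coordinates with coefficients $\omega_\rho^\lambda$, and using $z_{2\rho} = 2^{\ell(\rho)} z_\rho$, yields $\omega_\rho^{1^k} = (-1)^{k-\ell(\rho)}/2^{k-\ell(\rho)}$ immediately. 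If you want a self-contained proof, this symmetric-function route is the efficient one; the block-wise induction you sketch does not close without essentially reproducing it.
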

\begin{proof}
    See e.g. \cite[Example VII.2.2.b]{MBook}.
\end{proof}

In the analogy between $\chi^{\lambda}$ and $\omega^{\lambda}$, the relation of $\chi^{\lambda}$ with the conjugacy classes of $S_k$ corresponds to the relation of $\omega^{\lambda}$ with the double cosets $H_k \sigma H_k$ of $H_k$ in $S_{2k}$:

\begin{defn}
    For $\sigma \in S_{2k}$, define a graph $\Gamma(\sigma)$ as follows:
    \begin{itemize}
        \item the vertices are $1,\ldots,2k$;
        \item the edges connect $2i-1$ with $2i$ and $\sigma(2i-1)$ with $\sigma(2i)$ for $1 \leq i \leq k$.
    \end{itemize}
    The connected components of $\Gamma(\sigma)$ are cycles of even lengths, and dividing those lengths by $2$, we get an integer partition $\Xi(\sigma)$ of $k$ which is called the \emph{coset type} of $\sigma$.
\end{defn}

By e.g. \cite[VII.2.1.i]{MBook}, the coset type labels the double cosets of $H_k$ in $S_{2k}$. So for $\rho \vdash k$ we write $H_{\rho}$ for the corresponding double coset and then $S_{2k} = \bigsqcup_{\rho \vdash k} H_{\rho}$. By \cite[VII.2.3]{MBook} the cardinality of a double coset $H_{\rho}$ is
\[ | H_{\rho} | = \frac{| H_k |^2}{z_{2\rho}} = \frac{| H_k |^2}{2^{\ell(\rho)} z_{\rho}} \text{.} \]
Clearly the zonal spherical functions are constant on double cosets, so we write $\omega_{\rho}^{\lambda}$ for the value of $\omega^{\lambda}$ on $H_{\rho}$. For us, the critical property of the zonal spherical functions is that we still have the orthogonality relations analogous to \cref{thm:SymmetricOrthogonal}:

\begin{thm}[Orthogonality relations for $\omega^{\lambda}$]
    \label{thm:ZonalOrthogonal}
    For $\lambda,\mu \vdash k$, we have
    \[ \sum_{\rho \vdash k} \frac{1}{z_{2\rho}} \omega_\rho^\lambda \omega_\rho^\mu = \begin{cases}
        \frac{h(2\lambda)}{| H_k |^2} & \text{if } \lambda = \mu \\
        0 & \text{otherwise}
    \end{cases} \text{,} \]
    where $h(2\lambda)$ is the product of the hook lengths in $2 \lambda$.
\end{thm}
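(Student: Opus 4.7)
The plan is to reduce the sum over double cosets to a sum over all of $S_{2k}$, then unfold the definition of $\omega^\lambda$ and apply the standard character orthogonality on $S_{2k}$ (\cref{thm:SymmetricOrthogonal}).

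First, since $\omega^\lambda$ is constant on each double coset $H_\rho$ and $|H_\rho| = |H_k|^2/z_{2\rho}$, the identity
\[
\sum_{\rho \vdash k} \frac{1}{z_{2\rho}} \omega_\rho^\lambda \omega_\rho^\mu
= \frac{1}{|H_k|^2} \sum_{\rho \vdash k} |H_\rho|\, \omega_\rho^\lambda \omega_\rho^\mu
= \frac{1}{|H_k|^2} \sum_{\sigma \in S_{2k}} \omega^\lambda(\sigma)\omega^\mu(\sigma)
\]
turns the statement into a computation of an inner product on $S_{2k}$. This is the step that replaces the double-coset weights $1/z_{2\rho}$ by the uniform counting measure on $S_{2k}$.

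Next, I would substitute the definition $\omega^\lambda(\sigma) = |H_k|^{-1} \sum_{\zeta \in H_k} \chi^{2\lambda}(\sigma\zeta)$ and symmetrically for $\omega^\mu$, obtaining
\[
\frac{1}{|H_k|^4} \sum_{\zeta,\eta \in H_k} \sum_{\sigma \in S_{2k}} \chi^{2\lambda}(\sigma\zeta)\chi^{2\mu}(\sigma\eta).
\]
Reindexing via $\tau = \sigma\zeta$ gives $\sum_\tau \chi^{2\lambda}(\tau)\chi^{2\mu}(\tau\zeta^{-1}\eta)$. By the orthogonality relations for irreducible characters of $S_{2k}$ (a consequence of \cref{thm:SymmetricOrthogonal}, in the slightly more general form with a shift), this inner sum vanishes unless $\lambda = \mu$, in which case it equals $(2k)!\,\chi^{2\lambda}(\zeta^{-1}\eta)/\chi^{2\lambda}(1)$. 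This immediately gives the ``off-diagonal'' vanishing claim.

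For the diagonal case $\lambda = \mu$, a further reindexing $\xi = \zeta^{-1}\eta$ collapses the double sum over $H_k \times H_k$ to $|H_k| \sum_{\xi \in H_k} \chi^{2\lambda}(\xi)$. Here I use the defining property of the Gelfand pair $(S_{2k},H_k)$: since $V^{2\lambda}$ appears exactly once in $\Ind_{H_k}^{S_{2k}}(\triv)$, Frobenius reciprocity yields $\sum_{\xi \in H_k} \chi^{2\lambda}(\xi) = |H_k|$. Combining these with the hook length formula $\chi^{2\lambda}(1) = (2k)!/h(2\lambda)$ and simplifying produces exactly $h(2\lambda)/|H_k|^2$.

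I don't expect a serious obstacle here; the calculation is essentially bookkeeping around Schur orthogonality. The only point that requires care is verifying the version of the character orthogonality relation with the shift $\chi^{2\mu}(\tau\zeta^{-1}\eta)$, which follows from \cref{thm:SymmetricOrthogonal} together with the self-duality and class-function property of characters of $S_{2k}$.
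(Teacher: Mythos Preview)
Your argument is correct. The reduction of the weighted double-coset sum to the uniform sum over $S_{2k}$ via $|H_\rho| = |H_k|^2/z_{2\rho}$ is exactly right, the shifted Schur orthogonality $\sum_{\tau \in S_{2k}} \chi^{2\lambda}(\tau)\chi^{2\mu}(\tau g) = (2k)!\,\delta_{\lambda,\mu}\,\chi^{2\lambda}(g)/\chi^{2\lambda}(1)$ holds because characters of $S_{2k}$ are real and every element is conjugate to its inverse, and the Frobenius reciprocity step $\sum_{\xi \in H_k}\chi^{2\lambda}(\xi) = |H_k|$ is the correct way to close the computation in the diagonal case.

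As for comparison with the paper: the paper does not actually prove this statement. It is quoted as background and dispatched with a reference to Macdonald \cite[VII.2.15']{MBook}. What you have written is essentially the standard derivation one would extract from that source, so there is no discrepancy in approach---you have simply supplied the details the paper chose to cite away. If anything, your write-up is a useful complement: it shows explicitly that \cref{thm:ZonalOrthogonal} is a formal consequence of \cref{thm:SymmetricOrthogonal} together with the multiplicity-one property of the Gelfand pair, which is the conceptual point the paper is relying on when it later invokes this result in \cref{lem:ZonalQuadrature}.
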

\begin{proof}
    See e.g. \cite[VII.2.15']{MBook}.
\end{proof}

We need an analogue of the relationship between the irreducible characters $\chi^{\lambda}$ and the Schur functions $s_{\lambda}$. To this end, one identifies the graded algebra whose $k$-th component is the space of functions on $S_{2k}$ which are constant on the double cosets of $H_k$, with the graded algebra of symmetric functions. Then, the \emph{zonal polynomial} $Z_{\lambda}$ is the symmetric function corresponding to $\omega^{\lambda}$ in this identification. Again, we only need a particular value of $Z_{\lambda}$; write $(x)_k := x (x-1) \cdots (x-k+1)$ for the $k$-th \emph{falling factorial}.

\begin{prop}
    We have
    \[ Z_{\lambda}(1^d) = \prod_{(i,j) \in \lambda} (d+2j-i-1) \]
    for $\lambda \vdash k$ and $k \leq d$. In particular, we have $Z_{1^k}(1^d) = (d)_k$.
\end{prop}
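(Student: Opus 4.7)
The plan is to invoke the identification of the zonal polynomial $Z_\lambda$ with the integral-form Jack polynomial $J_\lambda^{(\alpha)}$ at Jack parameter $\alpha = 2$ (see \cite[VI.10 and VII.2]{MBook}), and then apply the known principal specialization formula
\[ J_\lambda^{(\alpha)}(1^d) = \prod_{(i,j) \in \lambda} \bigl(d + \alpha(j-1) - (i-1)\bigr), \]
which sits in Macdonald's book as formula VI.(10.20). Substituting $\alpha = 2$ immediately gives
\[ Z_\lambda(1^d) = \prod_{(i,j) \in \lambda} \bigl(d + 2(j-1) - (i-1)\bigr) = \prod_{(i,j) \in \lambda} (d + 2j - i - 1), \]
which is the first assertion.

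For the particular case $\lambda = (1^k)$, the boxes of the Young diagram are $(1,1),(2,1),\ldots,(k,1)$, so $j \equiv 1$ while $i$ runs from $1$ to $k$. Plugging into the product gives
\[ Z_{1^k}(1^d) = \prod_{i=1}^{k} (d + 2 - i - 1) = \prod_{i=1}^{k} (d - i + 1) = d(d-1)\cdots(d-k+1) = (d)_k, \]
matching the claimed identity.

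An alternative, more hands-on route would be to expand $Z_\lambda$ in the power-sum basis via its zonal spherical function coefficients and apply $p_\mu(1^d) = d^{\ell(\mu)}$, then resum the result into the product form. This amounts to reproving the principal specialization identity, so the citation route is much more efficient. There is no genuine obstacle in either direction: the proposition is essentially an unpacking of a standard specialization result from Macdonald's book, and the only hand-computation needed is the verification of the $\lambda = 1^k$ case above.
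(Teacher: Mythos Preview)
Your proposal is correct and essentially coincides with the paper's own treatment: the paper's proof is simply a citation to \cite[VII.2.25]{MBook}, so both approaches amount to quoting the principal specialization from Macdonald. Your route via the Jack polynomial $J_\lambda^{(2)}$ and VI.(10.20) is a mild detour compared to citing the zonal formula directly, and your explicit verification of the $\lambda = 1^k$ case is a small addition the paper leaves implicit, but there is no substantive difference.
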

\begin{proof}
    See e.g. \cite[VII.2.25]{MBook}.
\end{proof}

\subsection{Random matrices and the Weingarten calculus}

We denote by $U_d$ and $O_d$ the compact groups of $d \times d$ unitary and orthogonal matrices, respectively; a \emph{random $d \times d$ unitary} or \emph{orthogonal matrix} is a random element of $U_d$ or $O_d$, respectively, sampled according to the groups' respective Haar probability measures. For $1 \leq i,j \leq d$, let $u_{ij} : M_d \to \bC$ be the $(i,j)$-th matrix coordinate function, i.e. the function which picks out the $(i,j)$-th entry of a matrix.

The \emph{Weingarten calculus} is a family of combinatorial techniques for integration over certain classical matrix groups, named after D. Weingarten due to his pioneering work \cite{W1978} which concerned the group $SO_d$. Ideas which first appeared there were systematized and developed for the unitary group by Collins in \cite{C2003}, and later for the orthogonal and symplectic groups by Collins-\'{S}niady in \cite{CS2006}. One may prefer to take the perspective of e.g. Banica-Speicher in \cite{BS2009}, in which the Weingarten calculus follows from the construction of combinatorial models of the representation categories of so-called \emph{easy} groups.

\subsubsection{Integration on the unitary group}

The main theorem on integration over $U_d$ is the following, which is due to Collins and \'{S}niady in \cite{C2003,CS2006}. The matrix $\Wg_{k,d}^U$, indexed by $S_{2k}$, is constructed from the invariant theory of $U_d$.

\begin{thm}[Unitary Weingarten calculus]
    \label{thm:UnitaryWeingarten}
    For $k,k' \geq 1$ and $\bfi,\bfj : [k] \to [d]$ and $\bfi',\bfj' : [k'] \to [d]$, the integral
    \[ \int_{U_d} u_{\bfi(1) \bfj(1)} \cdots u_{\bfi(k) \bfj(k)} \overline{u_{\bfi'(1) \bfj'(1)}} \cdots \overline{u_{\bfi'(k') \bfj'(k')}} \, dU \]
    is
    \[ \sum_{\substack{\pi,\sigma \in S_k \\ \bfi = \bfi' \circ \pi \\ \bfj = \bfj' \circ \sigma}} \Wg_{k,d}^U(\pi,\sigma) \]
    when $k = k'$, and it is $0$ otherwise.
\end{thm}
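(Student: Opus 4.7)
The plan is to realize the integral as a matrix coefficient of the Haar projection onto $U_d$-invariants in $V^{\otimes k} \otimes \overline{V^{\otimes k'}}$ (where $V = \bC^d$) and then to identify the invariant subspace via Schur--Weyl duality. For the case $k \neq k'$, I would first use the closed subgroup of scalar unitaries $\{e^{i\theta} I_d\} \leq U_d$: since each $u_{ij}$ scales by $e^{i\theta}$ under $e^{i\theta} I_d$ while each $\overline{u_{ij}}$ scales by $e^{-i\theta}$, the full integrand picks up a factor $e^{i(k-k')\theta}$. Translation invariance of the Haar measure (Fubini against the Haar measure on this central circle) then forces the integral to vanish.

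For $k = k'$, write $e_{\bfi} := e_{\bfi(1)} \otimes \cdots \otimes e_{\bfi(k)}$ so that $u_{\bfi(1)\bfj(1)} \cdots u_{\bfi(k)\bfj(k)} = \langle U^{\otimes k} e_{\bfj}, e_{\bfi}\rangle$; the integrand is then a matrix coefficient of the representation $U \mapsto U^{\otimes k} \otimes \overline{U^{\otimes k}}$ on $V^{\otimes k} \otimes \overline{V^{\otimes k}}$, and its integral is $\langle E_k(e_{\bfj} \otimes \overline{e_{\bfj'}}), e_{\bfi} \otimes \overline{e_{\bfi'}}\rangle$, where $E_k$ is the orthogonal projection onto the $U_d$-invariant subspace. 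Under the standard identification $V^{\otimes k} \otimes \overline{V^{\otimes k}} \cong \mathrm{End}(V^{\otimes k})$ sending $v \otimes \overline{w}$ to $|v\rangle\langle w|$, the $U_d$-action becomes conjugation $T \mapsto U^{\otimes k} T (U^{\otimes k})^*$, so $E_k$ is the Hilbert--Schmidt orthogonal projection onto the commutant $\mathrm{End}_{U_d}(V^{\otimes k})$.

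By Schur--Weyl duality this commutant is spanned by the tensor-permutation operators $\{R(\pi) : \pi \in S_k\}$ with $R(\pi) e_{\bfi} = e_{\bfi \circ \pi^{-1}}$, and the Gram matrix $\langle R(\pi), R(\sigma)\rangle_{HS} = d^{\#\text{cycles}(\pi^{-1}\sigma)}$ is precisely the matrix whose (pseudo-)inverse is $\Wg_{k,d}^U$ as constructed in Section~2. Writing $E_k = \sum_{\pi,\sigma} \Wg_{k,d}^U(\pi,\sigma)\, |R(\pi)\rangle\langle R(\sigma)|$ (in Hilbert--Schmidt notation) and evaluating on $|e_{\bfj}\rangle\langle e_{\bfj'}|$, a direct computation gives $\langle R(\sigma),\, |e_{\bfj}\rangle\langle e_{\bfj'}|\rangle_{HS} = \delta_{\bfj,\, \bfj' \circ \sigma^{-1}}$, and similarly the pairing with $|e_{\bfi}\rangle\langle e_{\bfi'}|$ on the left yields $\delta_{\bfi,\, \bfi' \circ \pi^{-1}}$. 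Collapsing these two Kronecker deltas and relabeling $\pi, \sigma \to \pi^{-1}, \sigma^{-1}$ (using the standard symmetry of $\Wg_{k,d}^U$) reproduces the claimed formula with the constraints $\bfi = \bfi' \circ \pi$ and $\bfj = \bfj' \circ \sigma$.

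The main technical obstacle is the regime $k > d$, in which the operators $R(\pi)$ become linearly dependent, the Gram matrix $d^{\#\text{cycles}(\pi^{-1}\sigma)}$ is singular, and the ``inverse'' defining $\Wg_{k,d}^U$ must be reinterpreted as a Moore--Penrose pseudo-inverse. A cleaner route is to pass to the Fourier/character side on $S_k$: decomposing $\bC[S_k]$ into $\chi^{\lambda}$-isotypic components via \cref{thm:SymmetricOrthogonal} diagonalizes the Gram matrix with eigenvalues proportional to $s_{\lambda}(1^d)$, which by the hook-content formula vanishes exactly when $\ell(\lambda) > d$, so the restriction to the image of Schur--Weyl is the natural domain on which everything inverts and the formula holds verbatim. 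The remaining work is careful bookkeeping of left versus right $S_k$-actions and of the various conventions ($\Wg_{k,d}^U(\pi,\sigma)$ versus $\Wg_{k,d}^U(\pi^{-1}\sigma)$) appearing in the literature.
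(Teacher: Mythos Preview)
The paper does not give its own proof of this theorem: it is stated in the Preliminaries (Section~2.3.1) as a known result, attributed to Collins \cite{C2003} and Collins--\'Sniady \cite{CS2006}, and is used as a black box thereafter. So there is no ``paper's proof'' to compare against.

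Your sketch is essentially the standard Collins--\'Sniady argument and is correct in outline: the vanishing for $k \neq k'$ via the central circle, the identification of the Haar integral with the matrix coefficient of the projection onto $U_d$-invariants in $\mathrm{End}(V^{\otimes k})$, Schur--Weyl duality to identify the commutant as the span of the $R(\pi)$, and the expression of the projection via the (pseudo-)inverse of the Gram matrix $(d^{\#\text{cycles}(\pi^{-1}\sigma)})_{\pi,\sigma}$. You also correctly flag the $k>d$ degeneracy and the need for the pseudo-inverse, and your observation that passing to $\chi^\lambda$-isotypic components diagonalizes the Gram matrix with eigenvalues proportional to $s_\lambda(1^d)$ is exactly how one reconciles the pseudo-inverse with the character formula of \cref{thm:SchurWeingarten}. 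The only point that would require real care in a full write-up is the relabeling step at the end: the symmetry $\Wg_{k,d}^U(\pi^{-1},\sigma^{-1}) = \Wg_{k,d}^U(\pi,\sigma)$ holds because the Gram matrix (and hence its pseudo-inverse) is invariant under simultaneous left and right translation by $S_k$, but this deserves a sentence rather than a parenthetical.
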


We single out an expression for $\Wg^U$ in terms of the characters of $S_{2k}$ which we use in a critical way:

\begin{thm}[{\cite[Proposition 2.3]{CS2006}}]
    \label{thm:SchurWeingarten}
    For $\pi,\sigma \in S_{2k}$, we have
    \[ \Wg_{k,d}^U(\pi,\sigma) = \frac{1}{(k!)^2} \sum_{\substack{\lambda \vdash k \\ \ell(\lambda) \leq d}} \frac{\chi^{\lambda}(1)^2}{s_{\lambda}(1^d)} \chi^{\lambda}(\pi^{-1} \sigma) \text{.} \]
    In particular, $\Wg_{k,d}^U(\pi,\sigma)$ only depends on the cycle type of $\pi^{-1} \sigma$.
\end{thm}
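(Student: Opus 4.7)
The plan is to realize $\Wg_{k,d}^U$ as the (pseudo-)inverse of the Gram matrix $G_{k,d}(\pi,\sigma) = d^{c(\pi^{-1}\sigma)}$ (where $c$ counts the number of cycles) on the group algebra $\bC[S_k]$, and then to diagonalize $G_{k,d}$ via the representation theory of $S_k$. The motivation comes from \cref{thm:UnitaryWeingarten} combined with Schur--Weyl duality: the integral of $U^{\otimes k}\otimes\overline{U}^{\otimes k}$ over $U_d$ is the orthogonal projection onto the $U_d$-invariants in $\mathrm{End}((\bC^d)^{\otimes k})$, and Schur--Weyl identifies this invariant subspace as the span of the tensor permutation operators $T_\pi$ for $\pi\in S_k$. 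A short computation of the Hilbert--Schmidt pairings $\mathrm{tr}(T_\pi^* T_\sigma) = d^{c(\pi^{-1}\sigma)}$ identifies the Gram matrix, and matching coefficients of the projection in the $T_\pi$ spanning set reduces the theorem to inverting $G_{k,d}$.

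Because $G_{k,d}(\pi,\sigma)$ depends only on $\pi^{-1}\sigma$, the associated convolution element $z := \sum_\sigma d^{c(\sigma)}[\sigma]$ is central in $\bC[S_k]$, so it acts as a scalar $c_\lambda$ on each irreducible $V^\lambda$ with $\lambda\vdash k$. I would compute $c_\lambda$ via the standard formula $c_\lambda = \chi^\lambda(1)^{-1}\sum_\sigma d^{c(\sigma)}\chi^\lambda(\sigma)$ and then invoke the Frobenius character formula together with \cref{thm:SymmetricOrthogonal} to get $\sum_\sigma d^{c(\sigma)}\chi^\lambda(\sigma) = k!\,s_\lambda(1^d)$, hence $c_\lambda = k!\,s_\lambda(1^d)/\chi^\lambda(1)$. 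Observe that $c_\lambda$ vanishes exactly when $\ell(\lambda)>d$, which matches the range of summation in the statement under a pseudo-inverse interpretation. Inverting on the nonzero isotypic components and summing the resulting multiples of the central idempotents $p_\lambda = (\chi^\lambda(1)/k!)\sum_\tau \chi^\lambda(\tau^{-1})[\tau]$ produces $z^{-1} = (k!)^{-2}\sum_\lambda(\chi^\lambda(1)^2/s_\lambda(1^d))\sum_\tau\chi^\lambda(\tau)[\tau]$, using that $\chi^\lambda$ is real-valued on $S_k$. Reading off the coefficient of $[\pi^{-1}\sigma]$ yields the claimed formula, and the concluding remark that $\Wg_{k,d}^U(\pi,\sigma)$ depends only on the cycle type of $\pi^{-1}\sigma$ is then immediate since each $\chi^\lambda$ is a class function.

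The main obstacle is the identification of $c_\lambda$, i.e., the Frobenius-type identity $\sum_\sigma d^{c(\sigma)}\chi^\lambda(\sigma) = k!\,s_\lambda(1^d)$. While classical, it requires some fluency with symmetric functions: one rewrites the sum over $\sigma$ as a sum over cycle types $\rho$ weighted by $k!/z_\rho$, uses $p_\rho(1^d) = d^{\ell(\rho)}$, applies the Frobenius expansion $p_\rho = \sum_\mu \chi_\rho^\mu s_\mu$ at $(1^d)$, and collapses via character orthogonality. A more conceptual alternative is a second pass through Schur--Weyl: apply $z$ to $(\bC^d)^{\otimes k}\cong\bigoplus_\lambda W^\lambda\otimes V^\lambda$, note that it acts as $c_\lambda\cdot\Id_{W^\lambda}$ on the $V^\lambda$-component, and compute the trace two ways using $\dim W^\lambda = s_\lambda(1^d)$. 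Either way, care with the degenerate range $\ell(\lambda)>d$ is what makes the bookkeeping nontrivial.
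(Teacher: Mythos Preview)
The paper does not give its own proof of this statement: it is quoted verbatim as \cite[Proposition~2.3]{CS2006} and used as a black box. Your proposal is a correct sketch of precisely the Collins--\'{S}niady argument from that reference (realize the Haar projector via Schur--Weyl, identify the Gram matrix as convolution by the central element $\sum_\sigma d^{c(\sigma)}[\sigma]$, diagonalize on isotypics via $c_\lambda = k!\,s_\lambda(1^d)/\chi^\lambda(1)$, and pseudo-invert), so there is nothing to contrast.

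One incidental remark: the hypothesis ``$\pi,\sigma\in S_{2k}$'' in the displayed statement is a typo in the paper---everything lives in $S_k$, as you correctly assumed throughout.
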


\subsubsection{Integration on the orthogonal group}
 
Now let us state the main theorem on integration over $O_d$, which is due to Collins-\'{S}niady in \cite[Corollary 3.4]{CS2006}. The matrix $\Wg_{k,d}^O$, indexed by $P_2(2k)$, is constructed from the invariant theory of $O_d$, although in a somewhat different way from the unitary case.

\begin{thm}[Orthogonal Weingarten calculus]
    \label{thm:OrthogonalWeingarten}
    For $k \geq 1$ and $\bfi,\bfj : [2k] \to [d]$, we have
    \[ \int_{O_d} u_{\bfi(1) \bfj(1)} \cdots u_{\bfi(2k) \bfj(2k)} \, dU = \sum_{\substack{\pi,\sigma \in P_2(2k) \\ \pi \leq \ker(\bfi) \\ \sigma \leq \ker(\bfj)}} \Wg_{k,d}^O(\pi,\sigma) \]
    where the integral is with respect to the Haar probability measure of $O_d$.
\end{thm}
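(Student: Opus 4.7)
The plan is to interpret the integral as a matrix coefficient of the orthogonal projection onto the space of $O_d$-invariant tensors, and then use classical invariant theory to describe this space explicitly. First, I would identify the left-hand side with
\[ \langle e_{\bfi(1)} \otimes \cdots \otimes e_{\bfi(2k)}, \, T (e_{\bfj(1)} \otimes \cdots \otimes e_{\bfj(2k)}) \rangle, \]
where $T := \int_{O_d} U^{\otimes 2k} \, dU$ acts on $(\bC^d)^{\otimes 2k}$. Since $T$ is the average over a compact group acting by isometries, it is the orthogonal projection onto the subspace $\mathrm{Inv} := [(\bC^d)^{\otimes 2k}]^{O_d}$ of $O_d$-invariant tensors.

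Next I would invoke the First Fundamental Theorem of invariant theory for $O_d$ (due to Weyl) to realize $\mathrm{Inv}$ as the span of the pair-partition vectors
\[ v_\pi := \sum_{\substack{\bfi : [2k] \to [d] \\ \pi \leq \ker(\bfi)}} e_{\bfi(1)} \otimes \cdots \otimes e_{\bfi(2k)}, \qquad \pi \in P_2(2k). \]
These are precisely the tensors assembled from the $O_d$-invariant bilinear form by contracting tensor factors according to $\pi$, and they constitute the standard generating set underlying the Brauer algebra description of invariants. For $d \geq 2k$ they are linearly independent; for $d < 2k$ they are only a spanning set. Writing $G(\pi,\sigma) := \langle v_\pi, v_\sigma \rangle$ for the associated Gram matrix, one defines $\Wg_{k,d}^O$ to be its Moore--Penrose pseudoinverse; a direct expansion shows $G(\pi,\sigma) = d^{\ell(\pi,\sigma)}$ for a certain loop count $\ell(\pi,\sigma)$, which is not needed for the formula itself but is useful for concrete calculation.

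With this data in place, standard linear algebra (the projection onto the span of a possibly dependent family via the pseudoinverse of its Gram matrix) gives
\[ T = \sum_{\pi, \sigma \in P_2(2k)} \Wg_{k,d}^O(\pi, \sigma) \, |v_\pi\rangle \langle v_\sigma|, \]
and taking the matrix coefficient using $\langle e_{\bfi(1)} \otimes \cdots \otimes e_{\bfi(2k)}, v_\pi\rangle = 1$ iff $\pi \leq \ker(\bfi)$ (and $0$ otherwise), and the analogous statement for $\langle v_\sigma, e_{\bfj(1)} \otimes \cdots \otimes e_{\bfj(2k)}\rangle$, yields the claimed formula. The main obstacle is the invariant-theoretic input, Weyl's First Fundamental Theorem for $O_d$, which is classical but nontrivial; a secondary subtlety is the case $d < 2k$, where linear dependencies among the $v_\pi$ force one to work with the pseudoinverse and to check that the projection formula is insensitive to choices within the null space of $G$.
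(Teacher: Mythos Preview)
The paper does not actually prove this theorem: it is stated in the preliminaries as a known result, attributed to Collins--\'{S}niady \cite[Corollary 3.4]{CS2006}, with only the remark that ``the matrix $\Wg_{k,d}^O$, indexed by $P_2(2k)$, is constructed from the invariant theory of $O_d$.'' So there is no in-paper proof to compare against.

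That said, your sketch is correct and is in fact the standard argument underlying the cited references. Identifying $\int_{O_d} U^{\otimes 2k}\,dU$ with the orthogonal projection onto the $O_d$-invariants of $(\bC^d)^{\otimes 2k}$, invoking Weyl's First Fundamental Theorem to realize that space as the span of the pair-partition vectors $v_\pi$, and then expressing the projection via the (pseudo)inverse of the Gram matrix $G(\pi,\sigma)=\langle v_\pi,v_\sigma\rangle$ is exactly how $\Wg_{k,d}^O$ is constructed in \cite{CS2006} and in the easy-group framework of \cite{BS2009} the paper alludes to. Your observation that $\langle e_{\bfi},v_\pi\rangle$ equals $1$ when $\pi\leq\ker(\bfi)$ and $0$ otherwise is precisely what turns the abstract projection formula into the stated sum. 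The two points you flag as subtleties---the nontrivial FFT input, and the need for the Moore--Penrose pseudoinverse when $d<2k$ because the $v_\pi$ become linearly dependent---are the genuine technical content, and you have identified them accurately. In short, your proposal reproduces the proof the paper is citing rather than merely quoting, and there is no gap.
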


Again, we require an expression for $\Wg^O$ in terms of the representation theory of $S_{2k}$; here, the analogue of \cref{thm:SchurWeingarten} is in terms of the zonal spherical functions of the Gelfand pair $(S_{2k},H_k)$. Recall that the value of $\chi^{2 \lambda}$ at $1$, which is the dimension of the irreducible representation of $S_{2k}$ labeled by $2 \lambda$, is
\[ \chi^{2 \lambda}(1) = \left| \{ \text{standard Young tableaux of shape } 2\lambda \} \right| = \frac{(2k)!}{h(2\lambda)} \]
where the last equality is by the hook-length formula \cite[Theorem 4.2.14]{CSTBook}.

\begin{thm}[{\cite[Theorem 3.1]{CM2009}}]
    \label{thm:GelfandWeingarten}
    For $\pi,\sigma \in P_2(2k),$ we have
    \[ \Wg_{k,d}^O(\pi,\sigma) = \frac{2^k k!}{(2k)!} \sum_{\substack{\lambda \vdash k \\ \ell(\lambda) \leq d}} \frac{\chi^{2 \lambda}(1)}{Z_{\lambda}(1^d)} \omega^{\lambda}(\pi^{-1} \sigma) \]
    where $P_2(2k)$ is embedded into $S_{2k}$ as in \eqref{eq:Embed}. In particular, $\Wg_{k,d}^O(\pi,\sigma)$ only depends on the coset type $\Xi(\pi^{-1} \sigma)$ of $\pi^{-1} \sigma$.
\end{thm}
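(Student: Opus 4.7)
The plan is to realize $\Wg_{k,d}^O$ as the (pseudo-)inverse of a natural Gram matrix, then to diagonalize that Gram matrix in the zonal-spherical-function basis provided by the Gelfand pair $(S_{2k},H_k)$. This strategy parallels the proof of the Schur-function formula for $\Wg^U$ (\cref{thm:SchurWeingarten}).

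First I would recall the Schur--Weyl-type setup for $O_d$: the space of $O_d$-invariant vectors in $(\bC^d)^{\otimes 2k}$ is spanned by vectors $\xi_\pi$ indexed by $\pi \in P_2(2k)$, each $\xi_\pi$ built from the tensor contractions corresponding to the pairing $\pi$. A direct count of matching indices gives the Gram matrix
\[ G_{k,d}(\pi,\sigma) := \langle \xi_\pi,\xi_\sigma\rangle = d^{\ell(\Xi(\pi^{-1}\sigma))}, \]
where the exponent counts the loops in the overlay of the two pair partitions. Integration against $dU$ over $O_d$ is (a constant multiple of) the orthogonal projection onto this invariant subspace, which makes $\Wg_{k,d}^O(\pi,\sigma)$ coincide with the $(\pi,\sigma)$-entry of the Moore--Penrose pseudo-inverse $G_{k,d}^+$ (equal to $G_{k,d}^{-1}$ when $d \geq k$, where the $\xi_\pi$ are linearly independent). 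The theorem is thus reduced to computing $G_{k,d}^+$.

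Next I would use the fact that $G_{k,d}(\pi,\sigma)$ depends only on the double coset type $\Xi(\pi^{-1}\sigma)$, so, viewed as a function of $\pi^{-1}\sigma$, it is a bi-$H_k$-invariant function on $S_{2k}$. Such functions admit a unique expansion in zonal spherical functions, $G_{k,d} = \sum_{\lambda \vdash k} \hat G(\lambda)\, \omega^\lambda$, and \cref{thm:ZonalOrthogonal} inverts this transform. Grouping terms by coset type and applying the orthogonality relations, I would find
\[ \hat G(\lambda) = \frac{|H_k|^2}{h(2\lambda)}\sum_{\rho\vdash k}\frac{d^{\ell(\rho)}\,\omega^\lambda_\rho}{z_{2\rho}} = \frac{\chi^{2\lambda}(1)\, Z_\lambda(1^d)}{(2k)!}, \]
where the second equality uses the standard expansion of power-sum-type sums in the zonal polynomial basis (the zonal analogue of $p_\rho = \sum_\lambda \chi^\lambda_\rho s_\lambda$ in the Schur case). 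In particular $\hat G(\lambda) = 0$ exactly when $\ell(\lambda) > d$, since then $Z_\lambda(1^d) = 0$, which is what accounts for the degeneracy of the Gram matrix in the small-$d$ regime.

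Finally, using that the $\omega^\lambda$'s are proportional to minimal central idempotents in the convolution algebra of bi-$H_k$-invariant functions, the pseudo-inverse of $G_{k,d}$ is obtained by replacing $\hat G(\lambda)$ by $\hat G(\lambda)^{-1}$ (and omitting $\lambda$ with $\ell(\lambda) > d$). Translating back to matrix form and substituting the formula for $\hat G(\lambda)$ gives precisely
\[ \Wg_{k,d}^O(\pi,\sigma) = \frac{2^k k!}{(2k)!} \sum_{\substack{\lambda\vdash k\\ \ell(\lambda)\leq d}} \frac{\chi^{2\lambda}(1)}{Z_\lambda(1^d)} \omega^\lambda(\pi^{-1}\sigma), \]
and the ``in particular'' clause follows since each $\omega^\lambda(\pi^{-1}\sigma)$ depends only on $\Xi(\pi^{-1}\sigma)$. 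The hard step is the symmetric-function identity $\sum_\rho d^{\ell(\rho)}\omega^\lambda_\rho/z_{2\rho} \propto Z_\lambda(1^d)$, which encodes nontrivial content from the theory of zonal polynomials, together with the bookkeeping to line up the normalization constants $2^k k! = |H_k|$, $h(2\lambda)$, and $z_{2\rho}$ correctly.
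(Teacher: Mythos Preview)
The paper does not prove \cref{thm:GelfandWeingarten}; it is quoted from the literature with the attribution \cite[Theorem 3.1]{CM2009} and used as a black box. So there is no ``paper's own proof'' to compare against.

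That said, your outline is essentially the argument given in \cite{CM2009}: identify $\Wg_{k,d}^O$ as the pseudo-inverse of the Gram matrix $G_{k,d}(\pi,\sigma) = d^{\ell(\Xi(\pi^{-1}\sigma))}$ of the Brauer-type invariant vectors, observe that $G_{k,d}$ is bi-$H_k$-invariant as a function of $\pi^{-1}\sigma$, expand it in the zonal spherical functions $\omega^\lambda$, and invert termwise using that the $\omega^\lambda$ are (proportional to) orthogonal idempotents for convolution on $H_k \backslash S_{2k} / H_k$. The identification $\hat G(\lambda) \propto Z_\lambda(1^d)$ is exactly the evaluation $p_\rho(1^d) = d^{\ell(\rho)}$ combined with the zonal expansion of power sums, and the vanishing $Z_\lambda(1^d) = 0$ for $\ell(\lambda) > d$ correctly accounts for the restriction in the sum. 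The only places where care is needed are the normalization constants (matching $|H_k| = 2^k k!$, $h(2\lambda) = (2k)!/\chi^{2\lambda}(1)$, and the convolution norm of $\omega^\lambda$), which you flag but do not carry out; once those are tracked the stated formula drops out.
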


\section{Quadrature property and motivation}
\label{sec:Quadrature}

In this section we will explain our \emph{quadrature property} and where it comes from. Recall

\qp*

This is best motivated by just diving in to the symmetric additive case and seeing what Haar integrals must be handled. We can assume without loss of generality that $A$ and $B$ are diagonal: since they are normal, they can be diagonalized, say as $A = V_A D_A V_A^*$ and $B = V_B D_B V_B^*$ for some unitary $V_A$ and $V_B$ and some diagonal $D_A$ and $D_B$. Then we have
\begin{align*}
    c_x(A+UBU^*) &= c_x(V_A D_A V_A^* + U V_B D_B V_B^* U^*) \\
    &= c_x(D_A + V_A^* U V_B D_B V_B^* U^* V_A) \\
    &= c_x(D_A + (V_A^* U V_B) D_B (V_A^* U V_B)^*)
\end{align*}
and by invariance of Haar measure, we have
\[ \bE_U c_x(A + U B U^*) = \bE_U c_x(D_A + U D_B U^*) \text{.} \]
So write $A = \diag(a_1,\ldots,a_d)$ and $B = \diag(b_1,\ldots,b_d)$, and $W := A + UBU^*$.

To approach this expected characteristic polynomial, observe that
\[ c_x(W) = \sum_{k=0}^d x^{d-k} (-1)^k \sfe_k(W) \]
and
\[ \sfe_k(W) = \sum_{\substack{S \subseteq [d] \\ | S | = k}} \det(W(S,S)) \]
where for $S,T \subseteq [d]$ we denote by $W(S,T)$ the submatrix of $W$ consisting of the rows indexed by $S$ and the columns indexed by $T$. So we want to look at $\bE_U \det(W(S,S))$ for $S \subseteq [d]$ with $| S | = k$. 

\begin{lem}
    We have
    \begin{align*}
        \bE_U \det(W(S,S)) &= \sum_{R \subseteq S} \left( \prod_{i \in R} a_i \right) \sum_{\bfp : S \setminus R \to [d]} \left( \prod_{i \in S \setminus R} b_{\bfp(i)} \right) \\
        &\qquad \sum_{\sigma \in \Sym(S \setminus R)} \sgn(\sigma) \bE_U \left( \prod_{i \in S \setminus R} u_{i \bfp(i)} \overline{u_{\sigma(i) \bfp(i)}} \right)
    \end{align*}
    for $S \subseteq [d]$ with $| S | = k$, for $0 \leq k \leq d$.
\end{lem}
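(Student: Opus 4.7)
The proof is a direct computation, using the multilinearity/permutation expansion of the determinant and grouping terms according to which diagonal vs.\ off-diagonal contributions are picked. The only thing to watch is the bookkeeping with signs and indices.

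First, I would write $W_{ij} = a_i \delta_{ij} + (UBU^*)_{ij} = a_i \delta_{ij} + \sum_{p=1}^d b_p\, u_{ip} \overline{u_{jp}}$, using that $B = \diag(b_1,\ldots,b_d)$. Then the Leibniz formula gives
\[ \det(W(S,S)) = \sum_{\sigma \in \Sym(S)} \sgn(\sigma) \prod_{i \in S} \left( a_i \delta_{i\sigma(i)} + \sum_{p=1}^d b_p\, u_{ip} \overline{u_{\sigma(i) p}} \right). \]

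Next, I would expand the product over $i \in S$ by choosing, for each $i$, either the diagonal piece $a_i \delta_{i\sigma(i)}$ or the off-diagonal sum. Record the choice by the subset $R \subseteq S$ of indices where the diagonal piece is taken. The factor $\prod_{i \in R} \delta_{i\sigma(i)}$ then forces $\sigma$ to fix $R$ pointwise, so $\sigma$ restricts to a permutation $\sigma' \in \Sym(S \setminus R)$ with $\sgn(\sigma) = \sgn(\sigma')$ (the fixed points contribute trivially to the sign). Conversely, every $R \subseteq S$ and $\sigma' \in \Sym(S\setminus R)$ arises in this way from a unique $\sigma \in \Sym(S)$. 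Introducing a multi-index $\bfp : S \setminus R \to [d]$ to resolve the sums over $p$ for each $i \in S\setminus R$, the product becomes
\[ \prod_{i \in R} a_i \cdot \sum_{\bfp : S\setminus R \to [d]} \prod_{i \in S \setminus R} b_{\bfp(i)}\, u_{i \bfp(i)} \overline{u_{\sigma'(i) \bfp(i)}}. \]

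Finally, I would interchange the sums, use linearity of expectation, and relabel $\sigma' \mapsto \sigma$ to obtain exactly the stated identity. The main obstacle, such as it is, is just to check that the sign of $\sigma$ is preserved under restriction to $S \setminus R$ and that the decomposition $(\sigma, R) \leftrightarrow (\sigma', R)$ is a bijection; both are standard. No analysis or representation theory is needed here—this lemma is just the combinatorial unfolding that sets up the Weingarten integrals to be attacked in the next section.
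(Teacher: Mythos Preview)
Your proof is correct and follows essentially the same route as the paper: expand $\det(W(S,S))$ via the Leibniz formula, split each factor into the diagonal and $UBU^*$ pieces indexed by a subset $R\subseteq S$, use the Kronecker deltas to reduce $\sigma\in\Sym(S)$ to $\sigma\in\Sym(S\setminus R)$, and then unfold the remaining product into a sum over $\bfp:S\setminus R\to[d]$ before applying linearity of expectation. Your explicit remark that $(\sigma,R)\leftrightarrow(\sigma',R)$ is a bijection preserving sign is exactly the justification the paper leaves implicit in passing from the second to the third displayed line.
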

\begin{proof}
    We have
    \begin{align*}
        &\quad \det(W(S, S)) \\
        &= \sum_{\sigma \in \Sym(S)} \sgn(\sigma) \prod_{i \in S} \left( a_i \delta_{i,\sigma(i)} + \sum_{p=1}^d u_{ip} b_p \overline{u_{\sigma(i) p}} \right) \\
        &= \sum_{\sigma \in \Sym(S)} \sgn(\sigma) \sum_{R \subseteq S} \left( \left( \prod_{i \in R} a_i \delta_{i,\sigma(i)} \right) \left( \prod_{i \in S \setminus R} \left( \sum_{p=1}^d u_{ip} b_p \overline{u_{\sigma(i) p}} \right) \right) \right) \\
        &= \sum_{R \subseteq S} \sum_{\sigma \in \Sym(S \setminus R)} \sgn(\sigma) \left( \prod_{i \in R} a_i \right) \left( \prod_{i \in S \setminus R} \left( \sum_{p=1}^d u_{ip} b_p \overline{u_{\sigma(i) p}} \right) \right) \text{.}
    \end{align*}
    Switching the product and sum, we have
    \begin{align*}
        &\quad \bE_U \left( \prod_{i \in S \setminus R} \left( \sum_{p=1}^d u_{i p} b_p \overline{u_{\sigma(i) p}} \right) \right) \\
        &= \sum_{\bfp : S \setminus R \to [d]} \bE_U \left( \prod_{i \in S \setminus R} u_{i \bfp(i)} \overline{u_{\sigma(i) \bfp(i)}} \right) \left( \prod_{i \in S \setminus R} b_{\bfp(i)} \right)
    \end{align*}
    and putting this back into the sum above, we get the desired formula.
\end{proof}

Thus we want to work with
\[ \sum_{\sigma \in S_k} \sgn(\sigma) \bE_U \left( \prod_{i=1}^k u_{i \bfp(i)} \overline{u_{\sigma(i) \bfp(i)}} \right) \]
for $\bfp : [k] \to [d]$, for $0 \leq k \leq d$, and the quadrature property does exactly this.

\section{The unitary case}

In this section we show that $U_d$ itself has the quadrature property, i.e. that
\begin{equation}
    \label{eq:UnitaryQuadrature}
    \sum_{\sigma \in S_k} \sgn(\sigma) \int_{U_d} \prod_{i=1}^k u_{i \bfp(i)} \overline{u_{\sigma(i) \bfp(i)}} \, dU = \begin{cases}
        \frac{(d-k)!}{d!} & \text{if } \bfp \text{ is injective} \\
        0 & \text{otherwise}
    \end{cases}
\end{equation}
for $\bfp : [k] \to [d]$, for all $0 \leq k \leq d$. To this end we use \cref{thm:SchurWeingarten} to reduce the computation to the following simple lemma:

\begin{lem}
    We have
    \[ \sum_{\sigma \in S_k} \sgn(\sigma) \chi^{\lambda}(\sigma) = \begin{cases}
        k! & \text{if } \lambda = 1^k \\
        0 & \text{otherwise}
    \end{cases} \]
    for $\lambda \vdash k$.
\end{lem}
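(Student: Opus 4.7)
The plan is to recognize the sign function as an irreducible character of $S_k$ and then apply the orthogonality relations (\cref{thm:SymmetricOrthogonal}) directly. Specifically, the sign representation of $S_k$ is one-dimensional with character equal to $\sgn$, and it is the irreducible representation labeled by the partition $1^k = (1,1,\ldots,1)$; that is, $\chi^{1^k}(\sigma) = \sgn(\sigma)$ for all $\sigma \in S_k$.

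With this identification, the sum becomes
\[ \sum_{\sigma \in S_k} \sgn(\sigma) \chi^{\lambda}(\sigma) = \sum_{\sigma \in S_k} \chi^{1^k}(\sigma) \chi^{\lambda}(\sigma) \text{,} \]
and since the characters of $S_k$ are real-valued (in fact integer-valued), this is exactly $|S_k|$ times the left-hand side of the orthogonality relation from \cref{thm:SymmetricOrthogonal} applied to the pair $(\lambda, 1^k)$. The theorem then yields $k! = |S_k|$ when $\lambda = 1^k$ and $0$ otherwise, which is precisely the desired formula.

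If one prefers not to appeal to the identification of $\sgn$ with $\chi^{1^k}$ as a known fact, it can be quickly verified from basics: the sign map is a one-dimensional representation of $S_k$, hence irreducible; and via the Young tableaux description of irreducible representations, the sign representation corresponds to the single-column partition $1^k$. Alternatively, one can directly compute $\chi_{\rho}^{1^k} = (-1)^{k - \ell(\rho)}$ from a hook-length / determinantal formula and match it with \cref{lem:PermutationsPartitions}(3), which states that $\sgn(\sigma) = (-1)^{k - \ell(\mu_{\sigma})}$.

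There is no substantive obstacle here; the only point requiring any care is the correct labeling of the sign representation by $1^k$ rather than by $(k)$, and the conventions used in the paper (in particular the appearance of $\omega_\rho^{1^k}$ in \cref{lem:ZonalValue}) are consistent with this. The whole argument fits in a couple of lines once the identification is made.
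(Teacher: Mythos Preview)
Your proof is correct and follows essentially the same approach as the paper: identify $\sgn$ with the irreducible character $\chi^{1^k}$ and apply the orthogonality relations of \cref{thm:SymmetricOrthogonal}. The paper's version is simply a terser rendition of the same two-line argument.
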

\begin{proof}
    Observe that $\chi^{1^k}$ is just the sign character of $S_k$. So if $\lambda = 1^k$, we have
    \[ \sum_{\sigma \in S_k} \sgn(\sigma) \chi^{1^k}(\sigma) = \sum_{\sigma \in S_k} 1 = k! \]
    and otherwise, if $\lambda \neq 1^k$, then by \cref{thm:SymmetricOrthogonal} we have
    \[ \sum_{\sigma \in S_k} \sgn(\sigma) \chi^{\lambda}(\sigma) = \sum_{\sigma \in S_k} \chi^{1^k}(\sigma) \chi^{\lambda}(\sigma) = 0 \]
    so we are done.
\end{proof}

\begin{proof}[Proof of (1) in \cref{thm:QuadratureGroups}]
    By \cref{thm:UnitaryWeingarten} we have
    \begin{align*}
        \sum_{\sigma \in S_k} \sgn(\sigma) \int_{U_d} \prod_{i=1}^k u_{i \bfp(i)} \overline{u_{\sigma(i) \bfp(i)}} \, dU &= \sum_{\sigma \in S_k} \sgn(\sigma) \sum_{\substack{\pi,\tau \in S_k \\ 1 = \sigma \circ \pi \\ \bfp = \bfp \circ \tau}} \Wg_{k,d}^U(\pi,\tau) \\
        &= \sum_{\sigma \in S_k} \sgn(\sigma) \sum_{\substack{\tau \in S_k \\ \bfp = \bfp \circ \tau}} \Wg_{k,d}^U(\sigma^{-1},\tau) \text{.}
    \end{align*}
    If $\bfp$ is not injective, say there are some $i,j \in [k]$ with $i \neq j$ and $\bfp(i) = \bfp(j)$, we want to identify pairs of summands which cancel each other out, i.e. for each $\sigma \in S_k$ we want a corresponding $\sigma' \in S_k$ with $\sgn(\sigma') = -\sgn(\sigma)$ and
    \[ \sum_{\substack{\tau \in S_k \\ \bfp = \bfp \circ \tau}} \Wg_{k,d}^U(\sigma^{-1},\tau) = \sum_{\substack{\tau \in S_k \\ \bfp = \bfp \circ \tau}} \Wg_{k,d}^U((\sigma')^{-1},\tau) \text{.} \]
    To this end let $\sigma' = \sigma \cdot (i,j)$, so that $\sgn(\sigma') = -\sgn(\sigma)$. Moreover, since $\Wg_{k,d}^U(\sigma^{-1},\tau)$ only depends on the cycle type of $\sigma \tau$, we have
    \begin{align*}
        \sum_{\substack{\tau \in S_k \\ \bfp = \bfp \circ \tau}} \Wg_{k,d}^U((\sigma')^{-1},\tau) &= \sum_{\substack{\tau \in S_k \\ \bfp = \bfp \circ \tau}} \Wg_{k,d}^U((i,j) \sigma^{-1},\tau) \\
        &= \sum_{\substack{\tau \in S_k \\ \bfp = \bfp \circ \tau}} \Wg_{k,d}^U(\sigma^{-1},(i,j) \tau) \\
        &= \sum_{\substack{\tau \in S_k \\ \bfp = \bfp \circ \tau}} \Wg_{k,d}^U(\sigma^{-1},\tau)
    \end{align*}
    as the condition $\bfp = \bfp \circ \tau$ is invariant under translation of $\tau$ by $(i,j)$. Thus we have shown that when $\bfp$ is not injective, the summands in \cref{eq:UnitaryQuadrature} cancel each other out and the sum is $0$.

    If $\bfp$ is injective, then the only $\tau \in S_k$ with $\bfp = \bfp \circ \tau$ is $\tau = 1$ so by \cref{thm:SchurWeingarten} we have
    \begin{align*}
        &\quad \sum_{\sigma \in S_k} \sgn(\sigma) \sum_{\substack{\tau \in S_k \\ \bfp = \bfp \circ \tau}} \Wg_{k,d}^U(\sigma^{-1},\tau) \\
        &= \sum_{\sigma \in S_k} \sgn(\sigma) \Wg_{k,d}^U(\sigma^{-1},1) \\
        &= \sum_{\sigma \in S_k} \sgn(\sigma) \frac{1}{(k!)^2} \sum_{\substack{\lambda \vdash k \\ \ell(\lambda) \leq d}} \frac{\chi^{\lambda}(1)^2}{s_{\lambda}(1^d)} \chi^{\lambda}(\sigma) \\
        &= \frac{1}{(k!)^2} \sum_{\substack{\lambda \vdash k \\ \ell(\lambda) \leq d}} \left( \frac{k! \chi^{\lambda}(1)^2}{\chi^{\lambda}(1) \prod_{(i,j) \in \lambda} (d+j-i)} \right) \left( \sum_{\sigma \in S_k} \sgn(\sigma) \chi^{\lambda}(\sigma) \right) \\
        &= \left( \frac{\chi^{1^k}(1)}{k! \prod_{1 \leq i \leq k} (d+1-i)} \right) k! \\
        &= \frac{1}{\prod_{1 \leq i \leq k} (d-i+1)} = \frac{(d-k)!}{d!}
    \end{align*}
    and we are done.
\end{proof}

\section{The orthogonal case}

In this section we show that $O_d$ has the quadrature property, i.e. that
\begin{equation}
    \label{eq:OrthogonalQuadrature}
    \sum_{\sigma \in S_k} \sgn(\sigma) \int_{O_d} \prod_{i=1}^k u_{i \bfp(i)} u_{\sigma(i) \bfp(i)} \, dU = \begin{cases}
        \frac{(d-k)!}{d!} & \text{if } \bfp \text{ is injective} \\
        0 & \text{otherwise}
    \end{cases}
\end{equation}
for $\bfp : [k] \to [d]$, for all $0 \leq k \leq d$. To this end we use \cref{thm:GelfandWeingarten} to reduce the computation to the following lemma:

\begin{lem}
    \label{lem:ZonalQuadrature}
    We have
    \[ \sum_{\sigma \in S_k} \sgn(\sigma) \omega^{\lambda}(\sigma) = \begin{cases}
        \frac{(k+1)!}{2^k} & \text{if } \lambda = 1^k \\
        0 & \text{otherwise}
    \end{cases} \text{.} \]
    for $\lambda \vdash k$.
\end{lem}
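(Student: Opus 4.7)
The plan is to recognize the sum as an instance of the zonal orthogonality relation (\cref{thm:ZonalOrthogonal}), in direct analogy with how the unitary case used character orthogonality. In the context of the orthogonal Weingarten reduction the argument of $\omega^{\lambda}$ is really the pair partition $\pi_{\sigma} = \{\{2i-1,2\sigma(i)\} : i \in [k]\}$ embedded into $S_{2k}$; a short check of $\Gamma(\pi_{\sigma})$ shows that the coset type of $\pi_{\sigma}$ is $\mu_{\sigma}$, so the notation $\omega^{\lambda}(\sigma)$ is interpreted as $\omega^{\lambda}_{\mu_{\sigma}}$.

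First, I would group the sum by cycle type using \cref{lem:PermutationsPartitions}(1) and (3), rewriting
\[ \sum_{\sigma \in S_k} \sgn(\sigma) \omega^{\lambda}_{\mu_\sigma} = \sum_{\rho \vdash k} \frac{k!}{z_\rho}(-1)^{k-\ell(\rho)} \omega^{\lambda}_\rho. \]
Second, I would massage the coefficient into the shape required by \cref{thm:ZonalOrthogonal}. Using $z_{2\rho} = 2^{\ell(\rho)} z_\rho$ (\cref{lem:PermutationsPartitions}(2)) together with the explicit value $\omega^{1^k}_\rho = (-1)^{k-\ell(\rho)}/2^{k-\ell(\rho)}$ from \cref{lem:ZonalValue}, I obtain
\[ \frac{k!}{z_\rho}(-1)^{k-\ell(\rho)} = \frac{k! \cdot 2^{\ell(\rho)}}{z_{2\rho}}(-1)^{k-\ell(\rho)} = \frac{k! \cdot 2^k}{z_{2\rho}} \omega^{1^k}_\rho. \]

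Third, substituting this back and invoking \cref{thm:ZonalOrthogonal} immediately collapses the sum:
\[ \sum_{\sigma \in S_k} \sgn(\sigma) \omega^{\lambda}_{\mu_\sigma} = k! \cdot 2^k \sum_{\rho \vdash k} \frac{\omega^{1^k}_\rho \omega^{\lambda}_\rho}{z_{2\rho}} = \frac{k! \cdot 2^k \cdot h(2 \cdot 1^k)}{|H_k|^2} \delta_{\lambda,1^k}, \]
which is $0$ for $\lambda \neq 1^k$. Finally, for $\lambda = 1^k$ I would evaluate the hook-length product on the rectangular diagram $2 \cdot 1^k = (2^k)$: the hook at $(i,j)$ is $k-i-j+3$, giving $\prod_{i=1}^k (k-i+2)(k-i+1) = \prod_{i'=1}^k i'(i'+1) = k!(k+1)!$. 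Combined with $|H_k|^2 = 4^k (k!)^2$, this simplifies to $(k+1)!/2^k$.

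There is no genuine obstacle: the strategic step is combining $\sgn(\sigma) = (-1)^{k-\ell(\mu_\sigma)}$ with $1/z_\rho$ into a scalar multiple of $\omega^{1^k}_\rho/z_{2\rho}$, after which \cref{thm:ZonalOrthogonal} does all the work; everything else is arithmetic with factorials, powers of $2$, and a direct reading of hook lengths off a $k \times 2$ rectangle.
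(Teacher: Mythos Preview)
Your proof is correct. For the case $\lambda \neq 1^k$ it coincides with the paper's argument: both rewrite $\sgn(\sigma)/z_\rho$ as a multiple of $\omega^{1^k}_\rho/z_{2\rho}$ and then invoke \cref{thm:ZonalOrthogonal}. The difference lies in how the value at $\lambda = 1^k$ is obtained. The paper treats that case separately and computes $\sum_{\sigma} \sgn(\sigma)\,\omega^{1^k}(\sigma) = 2^{-k}\sum_\sigma 2^{\ell(\mu_\sigma)}$ directly via the Stirling-number identity $\sum_i c(k,i)2^i = 2^{(k)} = (k+1)!$ (\cref{lem:Stirling}). You instead stay inside the orthogonality framework and read off the diagonal constant $h(2\cdot 1^k)/|H_k|^2$, evaluating the hook-length product on the $k\times 2$ rectangle. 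Your route is more uniform---one mechanism handles both cases and \cref{lem:Stirling} is never needed---at the cost of the small hook-length computation; the paper's split avoids that computation but imports an extra combinatorial identity.
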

\begin{proof}
    If $\lambda = 1^k$, then by (3) in \cref{lem:PermutationsPartitions}, \cref{lem:Stirling}, and \cref{lem:ZonalValue}, we have
    \begin{align*}
        \sum_{\sigma \in S_k} \sgn(\sigma) \omega^{1^k}(\sigma) &= \sum_{\sigma \in S_k} (-1)^{k-\ell(\mu_{\sigma})} \frac{(-1)^{k - \ell(\mu_{\sigma})}}{2^{k - \ell(\mu_{\sigma})}} \\
        &= \frac{1}{2^k} \sum_{\sigma \in S_k} 2^{\ell(\mu_{\sigma})} \\
        &= \frac{1}{2^k} \sum_{i=1}^k 2^i c(k,i) \\
        &= \frac{2^{(k)}}{2^k} \\
        &= \frac{(k+1)!}{2^k} \text{.}
    \end{align*}
    On the other hand, if $\lambda \neq 1^k$, then by \cref{lem:PermutationsPartitions}, \cref{lem:ZonalValue}, and  \cref{thm:ZonalOrthogonal}, for any $\lambda \neq 1^k$ we have
    \begin{align*}
        0 &= \sum_{\rho \vdash k} \frac{1}{z_{2\rho}} \omega_{\rho}^{\lambda} \omega_{\rho}^{1^k} \\
        &= \sum_{\rho \vdash k} \frac{1}{z_{2\rho}} \omega_\rho^\lambda \frac{(-1)^{k -\ell(\rho)}}{2^{k -\ell(\rho)}} \\
        &= \frac{1}{2^k} \sum_{\rho \vdash k} (-1)^{k-\ell(\rho)} 2^{\ell(\rho)} \frac{1}{z_{2\rho}} \omega_{\rho}^{\lambda} \\
        &= \frac{1}{2^k k!} \sum_{\rho \vdash k} (-1)^{k-\ell(\rho)} \frac{k!}{z_{\rho}} \omega_{\rho}^{\lambda} \\
        &= \frac{1}{2^k k!} \sum_{\sigma \in S_k} \sgn(\sigma) \omega^{\lambda}(\sigma)
    \end{align*}
    thus $\sum_{\sigma \in S_k} \sgn(\sigma) \omega_{\mu_{\sigma}}^{\lambda} = 0$.
\end{proof}

\begin{proof}[Proof of (2) in \cref{thm:QuadratureGroups}]
    By \cref{thm:OrthogonalWeingarten}, with $\bfi_{\sigma} := (1,\sigma(1),\ldots,k,\sigma(k))$ and $\bfpp := (\bfp(1),\bfp(1),\ldots,\bfp(k),\bfp(k))$, we have
    \begin{align*}
        \sum_{\sigma \in S_k} \int_{O_d} \prod_{i=1}^k u_{i \bfp(i)} u_{\sigma(i) \bfp(i)} \, dU &= \sum_{\sigma \in S_k} \sgn(\sigma) \sum_{\substack{\pi,\tau \in P_2(2k) \\ \pi \leq \ker(\bfi_{\sigma}) \\ \tau \leq \ker(\bfpp)}} \Wg_{k,d}^O(\pi,\tau) \\
        &= \sum_{\sigma \in S_k} \sgn(\sigma) \sum_{\substack{\tau \in P_2(2k) \\ \tau \leq \ker(\bfpp)}} \Wg_{k,d}^O(\ker(\bfi_{\sigma}),\tau)
    \end{align*}
    since the condition $\pi \leq \ker(\bfi_{\sigma})$ forces equality. If $\bfp$ is not injective, say there are some $i \neq j$ with $\bfp(i) = \bfp(j)$, we want to identify pairs of summands which cancel each other out, i.e. for each $\sigma \in S_k$ we want a corresponding $\sigma' \in S_k$ with $\sgn(\sigma') = -\sgn(\sigma)$ and
    \[ \sum_{\substack{\tau \in P_2(2k) \\ \tau \leq \ker(\bfpp)}} \Wg_{k,d}^O(\ker(\bfi_{\sigma}),\tau) = \sum_{\substack{\tau \in P_2(2k) \\ \tau \leq \ker(\bfpp)}} \Wg_{k,d}^O(\ker(\bfi_{\sigma'}),\tau) \text{.} \]
    To this end let $\sigma' = (i,j) \sigma$, which obviously satisfies $\sgn(\sigma') = -\sgn(\sigma)$. Moreover, we have $\ker(\bfj_{\sigma'}) = (i,j) \ker(\bfj_{\sigma})$ in the embedding \eqref{eq:Embed}, so with $\tau' = (i,j) \tau$, $\tau^{-1} \ker(\bfj_{\sigma'})$ and $(\tau')^{-1} \ker(\bfj_{\sigma})$ have the same coset type. Since the condition $\tau \leq \ker(\bfpp)$ is invariant under translation of $\tau$ by $(i,j)$, by \cref{thm:GelfandWeingarten} we have
    \begin{align*}
        \sum_{\substack{\tau \in P_2{2k} \\ \tau \leq \ker(\bfpp)}} \Wg_{k,d}^O(\ker(\bfi_{\sigma'}),\tau) &= \sum_{\substack{\tau \in P_2(2k) \\ \tau \leq \ker(\bfpp)}} \Wg_{k,d}^O(\ker(\bfi_{\sigma}),\tau') \\
        &= \sum_{\substack{\tau \in P_2(2k) \\ \tau \leq \ker(\bfpp)}} \Wg_{k,d}^O(\ker(\bfi_{\sigma}),\tau) \text{.}
    \end{align*}
    Thus we have shown that when $\bfp$ is not injective, the summands in \cref{eq:OrthogonalQuadrature} cancel each other out and the sum is $0$.

    If $\bfp$ is injective, then the condition $\tau \leq \ker(\bfpp)$ forces equality, so since
    \[ \chi^{2(1^k)}(1) = \frac{(2k)!}{k! (k+1)!} \]
    and $Z_{1^k}(1^d) = (d)_k$, by \cref{thm:GelfandWeingarten} and \cref{lem:ZonalQuadrature}, we have
    \begin{align*}
        &\quad \sum_{\sigma \in S_k} \sgn(\sigma) \sum_{\substack{\tau \in P_2(2k) \\ \tau \leq \ker(\bfpp)}} \Wg_{k,d}^O(\ker(\bfi_{\sigma}),\tau) \\
        &= \sum_{\sigma \in S_k} \sgn(\sigma) \Wg_{k,d}^O(\ker(\bfi_{\sigma}),\ker(\bfpp)) \\
        &= \sum_{\sigma \in S_k} \sgn(\sigma) \frac{2^k k!}{(2k)!} \sum_{\lambda \vdash k} \frac{\chi^{2 \lambda}(1)}{Z_{\lambda}(1^d)} \omega_{\mu_{\sigma}}^{\lambda} \\
        &= \sum_{\sigma \in S_k} \sgn(\sigma) \frac{2^k k!}{(2k)!} \frac{\chi^{2(1^k)}(1)}{Z_{1^k}(1^d)} \omega_{\mu_{\sigma}}^{1^k} + \sum_{\sigma \in S_k} \sgn(\sigma) \frac{2^k k!}{(2k)!} \sum_{\substack{\lambda \vdash k \\ \lambda \neq 1^k}} \frac{\chi^{2 \lambda}(1)}{Z_{\lambda}(1^d)} \omega_{\mu_{\sigma}}^{\lambda} \\
        &= \frac{2^k}{(k+1)! (d)_k} \sum_{\sigma \in S_k} \sgn(\sigma) \omega_{\mu_{\sigma}}^{1^k} + \sum_{\substack{\lambda \vdash k \\ \lambda \neq 1^k}} \frac{2^k k! \chi^{2 \lambda}(1)}{(2k)! Z_{\lambda}(1^d)} \sum_{\sigma \in S_k} \sgn(\sigma) \omega_{\mu_{\sigma}}^{\lambda} \\
        &= \frac{1}{(d)_k} = \frac{(d-k)!}{d!}
    \end{align*}
    and we are done.
\end{proof}

\section{The hyperoctahedral case}

In this brief section we show the elementary proof that the so-called \emph{hyperoctahedral series} of groups $H_d^s = \widehat{\bfZ_s} \wr S_d$ for $2 \leq s \leq \infty$, which consist of $d \times d$ ``signed" permutation matrices where the ``signs" are $s$-th roots of unity (or in the case $s=\infty$, the entire circle), have the quadrature property. This is similar to e.g. \cite[Lemma 2.6]{MSS2019} which is the case $s=2$, and the quadrature results for these groups in general are already known from \cite{HPS2018}. What we want to show is that
\[ \sum_{\sigma \in S_k} \sgn(\sigma) \int_{H_d^s} \prod_{i=1}^k u_{i \bfp(i)} \overline{u_{\sigma(i) \bfp(i)}} \, dU = \begin{cases}
    \frac{(d-k)!}{d!} & \text{if } \bfp \text{ is injective} \\
    0 & \text{otherwise}
\end{cases} \]
for $\bfp : [k] \to [d]$, for $0 \leq k \leq d$.

\begin{proof}[Proof of (3) in \cref{thm:QuadratureGroups}]
    For $s < \infty$, we have
    \begin{align*}
        &\quad \sum_{\sigma \in S_k} \sgn(\sigma) \int_{H_d^s} \prod_{i=1}^k u_{i \bfp(i)} \overline{u_{\sigma(i) \bfp(i)}} \, dU \\
        &= \sum_{\sigma \in S_k} \sgn(\sigma) \frac{1}{s^d d!} \sum_{\epsilon_1,\ldots,\epsilon_d \in \widehat{\bfZ_s}} \sum_{\tau \in S_d} \prod_{i=1}^k (\epsilon_i \delta_{\bfp(i)=\tau(i)}) (\overline{\epsilon_{\sigma(i)}} \delta_{\bfp(i)=\tau(\sigma(i))})
    \end{align*}
    and the non-zero summands are the ones with $\bfp(i) = \tau(i)$ and $\bfp(i) = \tau(\sigma(i))$ for $1 \leq i \leq k$. If $\bfp$ is not injective, then there is no $\tau \in S_d$ with $\bfp(i) = \tau(i)$ for $1 \leq i \leq k$, so the sum is $0$. On the other hand, if $\bfp$ is injective, there are $(d-k)!$ permutations $\tau \in S_d$ with $\bfp(i) = \tau(i)$ for $1 \leq i \leq k$, i.e. $\tau \in S_{d-k}$; similarly the condition $\bfp(i) = \tau(\sigma(i))$ forces $\sigma(i) = i$ for $1 \leq i \leq k$, i.e. $\sigma = 1$. So the sum above becomes
    \begin{align*}
        \frac{1}{s^d d!} \sum_{\epsilon_1,\ldots,\epsilon_d \in \widehat{\bfZ_s}} \sum_{\tau \in S_{d-k}} \prod_{i=1}^k | \epsilon_i |^2 &= \frac{(d-k)!}{s^d d!} \sum_{\epsilon_1,\ldots,\epsilon_d \in \widehat{\bfZ_s}} \prod_{i=1}^k 1 \\
        &= \frac{(d-k)!}{d!}
    \end{align*}
    since the last sum gives $s^d$ summands, which are copies of $1$. The case $s=\infty$ is similar, except with an integral over the $d$-torus $\bT^d$ instead of a sum over $d$ copies of $\widehat{\bfZ_s}$.
\end{proof}

\section{Convolution formulae and quadrature}

Finally we prove the convolution formulae; recall the generalization

\convquad*

\subsection{Symmetric additive convolution}

We have already done a large portion of the proof for the symmetric additive convolution in \cref{sec:Quadrature}.

\begin{proof}[Proof of (1) in \cref{thm:ConvolutionQuadrature}]
    Let us pick back up from the computations in \cref{sec:Quadrature}. Notice that since we assume $A = \diag(a_1,\ldots,a_d)$ and $B = \diag(b_1,\ldots,b_d)$, we have
    \[ \sfe_i(A) = \frac{1}{i!} \sum_{\substack{\bfp : [i] \to [d] \\ \text{injective}}} a_{\bfp(1)} \cdots a_{\bfp(i)} = \sum_{\substack{\bfp : [i] \to [d] \\ \bfp(1) < \cdots < \bfp(i)}} a_{\bfp(1)} \cdots a_{\bfp(i)} \]
    and similarly for $\sfe_j(B)$. So we have
    \begin{align*}
        &\quad \bE_U \det(W(S,S)) \\
        &= \sum_{R \subseteq S} \left( \prod_{i \in R} a_i \right) \sum_{\bfp : S \setminus R \to [d]} \left( \prod_{i \in S \setminus R} b_{\bfp(i)} \right) \\
        &\qquad \sum_{\sigma \in \Sym(S \setminus R)} \sgn(\sigma) \bE_U \left( \prod_{i \in S \setminus R} u_{i \bfp(i)} \overline{u_{\sigma(i) \bfp(i)}} \right) \\
        &= \sum_{R \subseteq S} \frac{(d-| S \setminus R |)!}{d!} \left( \prod_{i \in R} a_i \right) \sum_{\substack{\bfp : S \setminus R \to [d] \\ \text{injective}}} \left( \prod_{i \in S \setminus R} b_{\bfp(i)} \right) \\
        &= \sum_{R \subseteq S} \frac{| S \setminus R |! (d - | S \setminus R |)!}{d!} \det(A(R,R)) \sfe_{| S \setminus R |}(B)
    \end{align*}
    and then
    \begin{align*}
        \bE_U(\sfe_k(W)) &= \sum_{| S | = k} \bE_U(\det(W(S,S))) \\
        &= \sum_{|S|=k} \sum_{R \subseteq S} \frac{| S \setminus R |! (d - | S \setminus R |)!}{d!} \det(A(R, R)) \sfe_{| S \setminus R |}(B) \\
        &= \sum_{i+j=k} \frac{(d-i)! (d-j)!}{d! (d-k)!} \sfe_j(B) \sum_{|R|=i} \det(A(R, R)) \\
        &= \sum_{i+j=k} \frac{(d-i)! (d-j)!}{d! (d-k)!} \sfe_i(A) \sfe_j(B)
    \end{align*}
    so we are done.
\end{proof}

\subsection{Symmetric multiplicative convolution}

The computations for the symmetric multiplicative convolution are somewhat simpler:

\begin{proof}[Proof of (2) in \cref{thm:ConvolutionQuadrature}]
    As for (1), assume without loss of generality that $A$ and $B$ are diagonal with $A = \diag(a_1,\ldots,a_d)$ and $B = \diag(b_1,\ldots,b_d)$. Write $W = A U B U^*$ and $U = (u_{ij})_{i,j}$, so the $(i,j)$-th entry of $W$ is $a_i \sum_{p=1}^d u_{i p} b_p \overline{u_{j p}}$ and for a subset $S \subseteq [d]$ with $|S|=k$, we have
    \begin{align*}
        \det(W(S,S)) &= \sum_{\sigma \in \Sym(S)} \sgn(\sigma) \prod_{i \in S} \left( a_i \sum_{p=1}^d u_{ip} b_p \overline{u_{\sigma(i) p}} \right) \\
        &= \det(A(S,S)) \sum_{\sigma \in \Sym(S)} \sgn(\sigma) \prod_{i \in S} \left( \sum_{p=1}^d u_{ip} b_p \overline{u_{\sigma(i) p}} \right) \text{.}
    \end{align*}
    Switching the product and sum, we have
    \begin{align*}
        \bE_U(\det(W(S,S))) &= \det(A(S,S)) \sum_{\bfp : S \to [d]} \left( \prod_{i \in S} b_{\bfp(i)} \right) \\
        &\qquad \sum_{\sigma \in \Sym(S)} \sgn(\sigma) \bE_U \left( \prod_{i \in S} u_{i \bfp(i)} \overline{u_{\sigma(i) \bfp(i)}} \right) \\
        &= \det(A(S,S)) \sum_{\substack{\bfp : S \to [d] \\ \text{injective}}} \left( \prod_{i \in S} b_{\bfp(i)} \right) \frac{(d-| S |)!}{d!} \\
        &= \det(A(S,S)) \frac{k! (d-k)!}{d!} \sfe_k(B) \text{,}
    \end{align*}
    thus
    \[ \bE_U \sfe_k(AUBU^*) = \frac{k! (d-k)!}{d!} \sum_{|S|=k} \det(A(S,S)) \sfe_k(B) = \frac{k! (d-k)!}{d!} \sfe_k(A) \sfe_k(B) \]
    and we are done.
\end{proof}

\subsection{Asymmetric additive convolution}

The computations here, for the asymmetric additive convolution, are more involved than for the symmetric convolutions; we refer to \cite[Section 2.3.2]{MSS2015} at some points for details which do not concern our techniques. Let us first make some simplifying notation:

\begin{nota}
    For $A,B \in M_d(\bC)$, write
    \[ h[A,B](x) := \bE_{U,V} c_x((A+UBV)(A+UBV)^*) \]
    and
    \[ \dil(A) := \begin{pmatrix}
        0 & A \\
        A^* & 0
    \end{pmatrix} \text{.} \]
\end{nota}

\begin{proof}[Proof of (3) in \cref{thm:ConvolutionQuadrature}]
    Assume without loss of generality -- passing to singular value decompositions if necessary -- that $A$ and $B$ are diagonal with $A = \diag(a_1,\ldots,a_d)$ and $B = \diag(b_1,\ldots,b_d)$. We have
    \[ h[A,B](x^2) = \bE_{U,V} c_x(\dil(AU+VB)) \]
    so with $M := AU+VB$, we have
    \[ \bE_{U,V} c_x(\dil(M)) = \sum_{k=0}^{2d} x^{2d-k} \sum_{\substack{W \subseteq [2d] \\ | W | = k}} \sum_{\sigma \in \Sym(W)} \sgn(\sigma) \bE_{U,V} \left( \prod_{i \in W} \dil(M)_{i \sigma(i)} \right) \text{.} \]
    If $k$ is odd, then for any $W \subseteq [2d]$ with $|W| = k$ and for any $\sigma \in \Sym(W)$, there is some $i_0 \in W$ such that $\dil(M)_{i_0 \sigma(i_0)} =0$. So we may assume $k$ is even, say $k = 2l$. The coefficient of $x^{2d-2l}$ is
    \[ \sum_{\substack{S \text{ set of } l \text{ rows} \\ T \text{ set of } l \text{ columns}}} \sum_{\substack{\rho : S \to T \\ \text{bijection}}} \sum_{\sigma \in \Sym(S)} \sgn(\sigma) \bE_{U,V} \left( \prod_{i \in S} M_{i \rho(i)} \overline{M_{\sigma(i) \rho(i)}} \right) \]
    and we have
    \begin{align*}
        &\quad \bE_{U,V} \left( \prod_{i \in S} M_{i \rho(i)} \overline{M_{\sigma(i) \rho(i)}} \right) \\
        &= \bE_{U,V} \left( \prod_{i \in S} (a_i u_{i \rho(i)} + v_{i \rho(i)} b_{\rho(i)}) \overline{(a_{\sigma(i)} u_{\sigma(i) \rho(i)} + v_{\sigma(i) \rho(i)} b_{\rho(i)})} \right) \\
        &= \sum_{R \subset S} \bE_U \left( \prod_{i \in R}  a_i \overline{a_{\sigma(i)}} u_{i \rho(i)} \overline{u_{\sigma(i) \rho(i)}} \right) \bE_V \left( \prod_{i \in S \setminus R} b_{\rho(i)} \overline{b_{\rho(i)}} v_{i \rho(i)} \overline{v_{\sigma(i) \rho(i)}} \right) \\
        &= \sum_{R \subset S} \left( \prod_{i \in R} a_i \overline{a_{\sigma(i)}} \right) \left( \prod_{i \in S \setminus R} b_{\rho(i)} \overline{b_{\rho(i)}} \right) \int_G \prod_{i \in R} u_{i \rho(i)} \overline{u_{\sigma(i) \rho(i)}} \, dU \\
        &\hspace{40ex} \int_G \prod_{i \in S \setminus R} v_{i \rho(i)} \overline{v_{\sigma(i) \rho(i)}} \, dV
    \end{align*}
    since the cross terms vanish in the second-last line.

    Putting this back into the larger sum, we get
    \begin{align*}
        &\quad \sum_{\substack{S \text{ set of } l \text{ rows} \\ T \text{ set of } l \text{ columns}}} \sum_{\substack{\rho : S \to T \\ \text{bijection}}} \sum_{\sigma \in \Sym(S)} \sgn(\sigma) \bE_{U,V} \left( \prod_{i \in S} M_{i \rho(i)} \overline{M_{\sigma(i) \rho(i)}} \right) \\
        &= \sum_{\substack{S \text{ set of } l \text{ rows} \\ T \text{ set of } l \text{ columns}}} \sum_{\substack{\rho : S \to T \\ \text{bijection}}} \sum_{\sigma \in \Sym(S)} \sgn(\sigma) \sum_{R \subseteq S} \left( \prod_{i \in R} a_i \overline{a_{\sigma(i)}} \right) \left( \prod_{i \in S \setminus R} b_{\rho(i)} \overline{b_{\rho(i)}} \right) \\
        &\hspace{10ex} \int_G \prod_{i \in R} u_{i \rho(i)} \overline{u_{\sigma(i) \rho(i)}} \, dU \int_G \prod_{i \in S \setminus R} v_{i \rho(i)} \overline{v_{\sigma(i) \rho(i)}} \, dV \\
        &= \sum_{\substack{S \text{ set of } l \text{ rows} \\ T \text{ set of } l \text{ columns}}} \sum_{\substack{R \subseteq S \\ Z \subseteq T \\ |R| + |Z| = l}} \sum_{\substack{\rho : S \to T \\ \text{bijection} \\ \rho(S \setminus R) = Z}} \left( \prod_{i \in R} | a_i |^2 \right) \left( \prod_{i \in S \setminus R} | b_{\rho(i)} |^2 \right) \\
        &\qquad \sum_{\substack{\sigma_1 \in \Sym(R) \\ \sigma_2 \in \Sym(S \setminus R)}} \sgn(\sigma_1) \sgn(\sigma_2) \int_G \prod_{i \in R} u_{i \rho(i)} \overline{u_{\sigma_1(i) \rho(i)}} \, dU \\
        &\hspace{25ex} \int_G \prod_{i \in S \setminus R} v_{i \rho(i)} \overline{v_{\sigma_2(i) \rho(i)}} \, dV
    \end{align*}
    since only the only non-zero summands are the ones with $\sigma \in \Sym(R)$ or $\sigma \in \Sym(S \setminus R)$. Now by the quadrature property, this is equal to
    \begin{align*}
        &\quad \sum_{\substack{S \text{ set of } l \text{ rows} \\ T \text{ set of } l \text{ columns}}} \sum_{\substack{R \subseteq S \\ Z \subseteq T \\ |R| + |Z| = l}} \sum_{\substack{\rho : S \to T \\ \text{bijection} \\ \rho(S \setminus R) = Z}} \left( \prod_{i \in R} | a_i |^2 \right) \left( \prod_{i \in S \setminus R} | b_{\rho(i)} |^2 \right) \\
        &\hspace{35ex} \frac{(d-|R|)!}{d!} \frac{(d-|S \setminus R|)!}{d!} \\
        &= \sum_{r+z=l} \left( \frac{(d-r)! (d-z)!}{d! (d-l)!} \right)^2 \sfe_r(A A^*) \sfe_z(B B^*)
    \end{align*}
    where the last equality follows as in \cite[Section 2.3.2]{MSS2015}.
\end{proof}

\section*{Acknowledgements}

The authors wish to thank Alexandru Nica and Daniel Perales for introducing them to finite free convolutions, and for several enlightening discussions around these topics. J.C. would also like to thank Gavin Orok for his feedback on various aspects of this project. Z.Y wish to thank Benoit Collins and Ping Zhong for helpful discussions, and he also grateful to the Pure Mathematics Department of Waterloo University, where provided a fruitful environment to work on the project. This project was partially supported by NSFC No. 11771106.

\end{document}